  \theoremstyle{definition}
  \newtheorem{definition}{Definition}
  \newtheorem{remark}[definition]{Remark}
  \theoremstyle{plain}
  \newtheorem{lemma}[definition]{Lemma}
  \newtheorem{theorem}[definition]{Theorem}
\begin{document}

\title{On the definition of quasi-Jordan algebra}

\author{Murray R. Bremner}

\address{Department of Mathematics and Statistics, University of Saskatchewan,
Saskatoon, Saskatchewan, Canada}

\email{bremner@math.usask.ca}

\begin{abstract}
Vel\'asquez and Felipe recently introduced quasi-Jordan algebras based on the
product $a \triangleleft b = \tfrac12 ( a \dashv b \,+\, b \vdash a )$ in an
associative dialgebra with operations $\dashv$ and $\vdash$. We determine the
polynomial identities of degree $\le 4$ satisfied by this product.  In addition
to right commutativity and the right quasi-Jordan identity, we obtain a new
associator-derivation identity.
\end{abstract}

\maketitle

Loday \cite{Loday1, Loday2} defined an \textbf{(associative) dialgebra} to be a
vector space with two bilinear operations $a \dashv b$ and $a \vdash b$ satisfying
these polynomial identities:
  \allowdisplaybreaks
  \begin{alignat*}{3}
  ( a \dashv b ) \dashv c &= a \dashv ( b \dashv c ),
  &\quad
  ( a \vdash b ) \vdash c &= a \vdash ( b \vdash c ),
  &\quad
  ( a \vdash b ) \dashv c &= a \vdash ( b \dashv c ),
  \\
  ( a \vdash b ) \vdash c &= ( a \dashv b ) \vdash c,
  &\quad
  a \dashv ( b \dashv c ) &= a \dashv ( b \vdash c ).
  \end{alignat*}
Let $w = \overline{\, a_1 \cdots a_n}$ ($a_1, \hdots, a_n \in X$) be a dialgebra
monomial over $X$; the bar indicates some placement of parentheses and choice
of operations. If $w \in X$ then the \textbf{center} $c(w) = w$; otherwise
$c( w_1 \dashv w_2 ) = c(w_1)$ and $c( w_1 \vdash w_2 ) = c(w_2)$.

\begin{lemma} \label{dimonoidcalculus}
\cite{Loday2} If $w = \overline{\, a_1 \cdots a_n}$ and $c(w) = a_k$ then
  \[
  w = ( a_1 \vdash \cdots \vdash a_{k-1} ) \vdash a_k \dashv ( a_{k+1} \dashv \cdots \dashv a_n ).
  \]
\end{lemma}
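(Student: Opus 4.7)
The plan is to proceed by induction on $n = |w|$. The base case $n = 1$ is immediate: $w = a_1 = c(w)$ and the claimed form degenerates to $a_1$.

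Before the inductive step I would prove two normal-form sublemmas. \textbf{Sublemma A:} for any element $a$ and any dialgebra monomial $v$ on variables $b_1, \ldots, b_p$ taken in order,
\[
a \dashv v \;=\; a \dashv b_1 \dashv b_2 \dashv \cdots \dashv b_p,
\]
with the right side right-associated. This is an induction on $p$: writing $v = v_1 \star v_2$, axiom 5 lets one replace $\star$ by $\dashv$ when it is $\vdash$; then axiom 1 gives $a \dashv (v_1 \dashv v_2) = (a \dashv v_1) \dashv v_2$, and the inductive hypothesis applied to $v_1$ (and then to $v_2$ with the resulting left factor serving as the new $a$) finishes the reduction, with a final use of axiom 1 to right-associate. \textbf{Sublemma B} is the dual statement $u \vdash a = b_1 \vdash \cdots \vdash b_p \vdash a$, proved in the same way using axioms 4 and 2.

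For the inductive step, write $w = w_1 \star w_2$ with $|w_1| = m$. If $\star = \dashv$, then $c(w) = c(w_1) = a_k$ with $k \le m$. The inductive hypothesis on $w_1$ (re-bracketed via axiom 3) gives $w_1 = L \vdash (a_k \dashv M)$ where $L = a_1 \vdash \cdots \vdash a_{k-1}$ and $M = a_{k+1} \dashv \cdots \dashv a_m$; axiom 3 and axiom 1 then yield
\[
w \;=\; L \vdash \bigl( (a_k \dashv M) \dashv w_2 \bigr) \;=\; L \vdash \bigl( a_k \dashv (M \dashv w_2) \bigr),
\]
and Sublemma~A applied with $v = M \dashv w_2$ (a monomial on $a_{k+1}, \ldots, a_n$) collapses the inner expression to $a_k \dashv a_{k+1} \dashv \cdots \dashv a_n$. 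If instead $\star = \vdash$, then $c(w) = c(w_2) = a_k$ with $m < k \le n$; the inductive hypothesis on $w_2$ gives $w_2 = L' \vdash (a_k \dashv R)$ with $L' = a_{m+1} \vdash \cdots \vdash a_{k-1}$ and $R = a_{k+1} \dashv \cdots \dashv a_n$, axiom 2 produces $w = (w_1 \vdash L') \vdash (a_k \dashv R)$, and Sublemma~B with $u = w_1 \vdash L'$ and the element $a_k \dashv R$ on the right finishes the case.

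The main obstacle is bookkeeping rather than mathematical depth: one must keep straight the asymmetry between $\vdash$ (center in the right subtree) and $\dashv$ (center in the left subtree), apply axiom 3 consistently to move between the two natural bracketings $(L \vdash a_k) \dashv R$ and $L \vdash (a_k \dashv R)$ of the target form, and verify that the sublemmas are invoked at the right moments so that the $\vdash$-prefix and $\dashv$-suffix of the canonical form mesh correctly.
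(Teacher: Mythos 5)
Your proof is correct: the induction on $n$ with the two normalization sublemmas (collapsing $a\dashv v$ via axioms 5 and 1, and $u\vdash a$ via axioms 4 and 2) is exactly the standard argument, and the case split on the root operation together with axiom 3 to shuttle between the bracketings $(L\vdash a_k)\dashv R$ and $L\vdash(a_k\dashv R)$ handles the inductive step cleanly. Note, however, that the paper itself gives no proof of this lemma --- it is quoted from Loday \cite{Loday2} as a known fact --- so there is nothing internal to compare against; your argument is in substance the one found in Loday's original treatment.
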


\noindent We write $w = a_1 \cdots a_{k-1} \widehat a_k a_{k+1} \cdots a_n$ for
this \textbf{normal form} of $w$.

\begin{lemma} \label{freedialgebra}
\cite{Loday2} The monomials $a_1 \cdots a_{k-1} \widehat a_k a_{k+1} \cdots a_n$
($1 \le k \le n$) for $a_1, \hdots, a_n \in X$ form a basis of the free dialgebra on $X$.
\end{lemma}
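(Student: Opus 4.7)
The proof splits into spanning and linear independence. Spanning is immediate from Lemma \ref{dimonoidcalculus}: every dialgebra monomial in $a_1,\dots,a_n$ can be rewritten as $a_1 \cdots a_{k-1}\widehat{a_k} a_{k+1} \cdots a_n$ for some $k$, so the asserted monomials already span the free dialgebra $F(X)$ in each multilinear degree. The real work is linear independence, which I would establish by exhibiting an explicit candidate for $F(X)$ on top of these symbols and then comparing with the abstract free object.

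The plan is to let $V$ be the vector space with formal basis $B = \{\, a_1 \cdots a_{k-1}\widehat{a_k}a_{k+1}\cdots a_n : n \ge 1,\; 1 \le k \le n,\; a_i \in X \,\}$ and to define two bilinear operations on $V$ by prescribing them on $B$:
\[
(a_1\cdots\widehat{a_k}\cdots a_m) \dashv (b_1\cdots\widehat{b_l}\cdots b_n) = a_1\cdots\widehat{a_k}\cdots a_m b_1\cdots b_n,
\]
\[
(a_1\cdots\widehat{a_k}\cdots a_m) \vdash (b_1\cdots\widehat{b_l}\cdots b_n) = a_1\cdots a_m b_1\cdots\widehat{b_l}\cdots b_n.
\]
In other words, concatenate the two underlying words and place the hat either at the inherited position from the left factor (for $\dashv$) or from the right factor (for $\vdash$). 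I would then check the five Loday identities by chasing hat placements through both sides: in each axiom the two triple products give the same underlying word $a_1\cdots a_m b_1\cdots b_n c_1\cdots c_p$ with the hat landing at the same position, e.g.\ axioms (i) and (v) both force the hat onto the left factor $u$, axiom (iii) onto the middle factor $v$, and axioms (ii) and (iv) onto the right factor $w$. This is precisely the combinatorial content of the axioms, so each verification reduces to inspecting which factor contributes the center.

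Once $V$ is known to be a dialgebra, I would finish by appealing to the universal property of $F(X)$: the set map $X \hookrightarrow V$ (sending $a$ to the length-one symbol $\widehat{a}$) extends to a unique dialgebra homomorphism $\Phi : F(X) \to V$. Lemma \ref{dimonoidcalculus} tells us how to evaluate $\Phi$ on a normal-form monomial: using associativity of $\vdash$ and $\dashv$ and the product formulas above, one checks that $\Phi$ sends the normal form $a_1 \cdots \widehat{a_k} \cdots a_n \in F(X)$ to the corresponding basis vector in $B \subset V$. Since distinct elements of $B$ are linearly independent in $V$ by construction, any linear dependence among the normal-form monomials in $F(X)$ would push forward to a dependence in $V$, which is impossible. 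I expect the only real obstacle to be the bookkeeping in the five axiom verifications, especially getting straight, via Lemma \ref{dimonoidcalculus}, that the proposed formulas on $B$ are the correct shadow of the dialgebra operations; everything else (spanning, universal property, linear independence from injectivity onto a basis) is formal.
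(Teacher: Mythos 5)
The paper gives no proof of this lemma; it is quoted with a citation to Loday, whose construction of the free dialgebra on $X$ as the span of the hatted words $a_1\cdots\widehat{a_k}\cdots a_n$ (equivalently $T(X)\otimes X\otimes T(X)$) with exactly the two concatenation products you describe is the standard argument. Your reconstruction is correct and complete: spanning follows from Lemma \ref{dimonoidcalculus}, the five axiom checks reduce to tracking which factor carries the hat (axioms 1 and 5 keep it in the left factor, 2 and 4 in the right, 3 in the middle), and the induced map $\Phi$ from the universal property sends each normal form bijectively onto the formal basis $B$, giving independence. This is essentially the same route Loday takes, so there is nothing to add.
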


Vel\'asquez and Felipe \cite{VelasquezFelipe1, VelasquezFelipe2} introduced the
\textbf{(right) quasi-Jordan product} in a dialgebra over a field of
characteristic $\ne 2$:
 \[
 a \triangleleft b = \tfrac12 ( a \dashv b \,+\, b \vdash a ).
 \]
We omit the operation symbol and the scalar $\tfrac12$, and write $ab = a \dashv b
\,+\, b \vdash a$.

\begin{lemma} \label{rightcommutative}
\cite{VelasquezFelipe1} The quasi-Jordan product satisfies the right
commutative identity:
  \[
  a ( b c ) = a ( c b ).
  \]
\end{lemma}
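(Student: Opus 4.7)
The plan is to verify the identity by direct expansion in the free dialgebra, reducing every resulting monomial to the normal form supplied by Lemma \ref{dimonoidcalculus}, and then matching terms.

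First I would expand both sides using the definition $xy = x \dashv y + y \vdash x$. On the left, $bc = b \dashv c + c \vdash b$, so
\[
a(bc) = a \dashv (b \dashv c) + a \dashv (c \vdash b) + (b \dashv c) \vdash a + (c \vdash b) \vdash a,
\]
and analogously $a(cb)$ expands into four terms obtained by swapping the roles of $b$ and $c$ in this expression.

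Next I would rewrite each of these eight monomials in Loday normal form. Each term involves the three letters $a,b,c$ in a fixed left-to-right order (either $a,b,c$ or $a,c,b$), and its center, in the sense defined just before Lemma \ref{dimonoidcalculus}, is determined by the outermost operation: terms with outer $\dashv$ have center $a$, and terms with outer $\vdash$ have center $a$ as well (reading through the $\vdash$). Using Lemma \ref{dimonoidcalculus}, each of the four monomials on the left collapses to one of $\widehat a b c$, $\widehat a c b$, $b c \widehat a$, $c b \widehat a$, and the same four normal forms arise on the right.

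The key step is simply to check that both expansions produce the same multiset of basis monomials; the dialgebra axioms are exactly what is needed to identify, for example, $a \dashv (c \vdash b)$ with $\widehat a c b$ (via $a \dashv (c \vdash b) = a \dashv (c \dashv b)$) and $(b \dashv c) \vdash a$ with $b c \widehat a$ (via $(b \dashv c) \vdash a = (b \vdash c) \vdash a$). Once everything is in normal form, Lemma \ref{freedialgebra} guarantees that equal linear combinations of normal-form monomials represent equal elements, and the identity follows. There is no real obstacle here beyond careful bookkeeping; the main point is that the asymmetric definition of $xy$ is precisely balanced so that the four normal-form summands on each side coincide.
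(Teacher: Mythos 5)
Your proof is correct and follows essentially the same route as the paper: expand with the quasi-Jordan product, reduce each of the four summands to Loday normal form using the dialgebra axioms and Lemma \ref{dimonoidcalculus}, and observe that the resulting sum $\widehat a b c + \widehat a c b + b c \widehat a + c b \widehat a$ is symmetric in $b$ and $c$. The only cosmetic difference is that the paper expands just the left-hand side and notes the $b \leftrightarrow c$ symmetry, whereas you expand both sides and match the multisets of basis monomials.
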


\noindent Expanding the left side gives a result which is symmetric in $b$ and
$c$:
  \allowdisplaybreaks
  \begin{align*}
  &
  a ( b c )
  =
  a \dashv ( b \dashv c + c \vdash b ) + ( b \dashv c + c \vdash b ) \vdash a
  \\
  &=
  a \dashv ( b \dashv c ) + a \dashv ( c \vdash b ) + ( b \dashv c ) \vdash a + ( c \vdash b ) \vdash a
  =
  \widehat a b c + \widehat a c b + b c \widehat a + c b \widehat a.
  \end{align*}

\begin{lemma} \label{rightquasijordan}
\cite{VelasquezFelipe1} The quasi-Jordan product satisfies the right
quasi-Jordan identity:
  \[
  ( b a ) a^2 = ( b a^2 ) a.
  \]
\end{lemma}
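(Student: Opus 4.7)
The plan is to work in the free dialgebra with the normal-form basis of Lemma~\ref{freedialgebra} and to expand both $(ba)a^2$ and $(ba^2)a$ in this basis. Since every dialgebra monomial equals a unique basis element, it suffices to show that the two expansions coincide.

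First, I would reduce the smaller subexpressions. Using $xy = x\dashv y + y\vdash x$ together with the five Loday axioms, one immediately finds $a^2 = \widehat{a}a + a\widehat{a}$ and $ba = \widehat{b}a + a\widehat{b}$. For $ba^2 = b\dashv a^2 + a^2\vdash b$, axiom~(5) collapses $b\dashv(a\dashv a) = b\dashv(a\vdash a)$ and axiom~(4) collapses $(a\dashv a)\vdash b = (a\vdash a)\vdash b$, giving $ba^2 = 2\widehat{b}aa + 2aa\widehat{b}$.

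Next, I would expand $(ba)a^2 = (ba)\dashv a^2 + a^2\vdash (ba)$ by distributing, yielding a sum of eight length-four dialgebra products. Each of these reduces under the Loday axioms to a unique normal-form monomial; since the center of a product is always inherited from one of its factors, in every resulting monomial the center must be the variable $b$. Only the four basis monomials $\widehat{b}aaa$, $a\widehat{b}aa$, $aa\widehat{b}a$, $aaa\widehat{b}$ can therefore arise, and a direct count shows that each occurs exactly twice:
\[
(ba)a^2 = 2\,\widehat{b}aaa + 2\,a\widehat{b}aa + 2\,aa\widehat{b}a + 2\,aaa\widehat{b}.
\]
I would then carry out the analogous expansion $(ba^2)a = (ba^2)\dashv a + a\vdash(ba^2)$ using the already simplified form of $ba^2$; this produces the same four basis monomials with the same coefficients, establishing the identity.

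The main obstacle is nothing more than careful bookkeeping: each of the dozen or so reductions to normal form is a single application of one of Loday's axioms, and by Lemma~\ref{dimonoidcalculus} the target normal form is determined uniquely by the underlying word and the position of the center. No identities beyond the dialgebra axioms are needed.
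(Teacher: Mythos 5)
Your proposal is correct and follows essentially the same route as the paper: expand both sides via $xy = x\dashv y + y\vdash x$ into eight dialgebra terms each, reduce to the normal-form basis of Lemmas~\ref{dimonoidcalculus} and~\ref{freedialgebra}, and observe that both sides equal $2\,\widehat{b}aaa + 2\,a\widehat{b}aa + 2\,aa\widehat{b}a + 2\,aaa\widehat{b}$. Your observation that the center of every term must be $b$ (so only four basis monomials can occur) is a pleasant bookkeeping shortcut, but the argument is the same.
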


\noindent Expanding both sides gives the same result:
  \allowdisplaybreaks
  \begin{align*}
  &
  ( b a ) a^2
  =
  ( b \dashv a + a \vdash b ) \dashv ( a \dashv a + a \vdash a )
  +
  ( a \dashv a + a \vdash a ) \vdash ( b \dashv a + a \vdash b )
  \\
  &=
  ( b \dashv a ) \dashv ( a \dashv a )
  +
  ( b \dashv a ) \dashv ( a \vdash a )
  +
  ( a \vdash b ) \dashv ( a \dashv a )
  +
  ( a \vdash b ) \dashv ( a \vdash a )
  \\
  &\quad
  +
  ( a \dashv a ) \vdash ( b \dashv a )
  +
  ( a \dashv a ) \vdash ( a \vdash b )
  +
  ( a \vdash a ) \vdash ( b \dashv a )
  +
  ( a \vdash a ) \vdash ( a \vdash b )
  \\
  &=
  2 \widehat b a a a
  +
  2 a \widehat b a a
  +
  2 a a \widehat b a
  +
  2 a a a \widehat b
  \\
  &=
  ( b \dashv ( a \dashv a ) ) \dashv a
  +
  ( b \dashv ( a \vdash a ) ) \dashv a
  +
  ( ( a \dashv a ) \vdash b ) \dashv a
  +
  ( ( a \vdash a ) \vdash b ) \dashv a
  \\
  &\quad
  +
  a \vdash ( b \dashv ( a \dashv a ) )
  +
  a \vdash ( b \dashv ( a \vdash a ) )
  +
  a \vdash ( ( a \dashv a ) \vdash b )
  +
  a \vdash ( ( a \vdash a ) \vdash b )
  \\
  &=
  ( b \dashv ( a \dashv a + a \vdash a ) + ( a \dashv a + a \vdash a ) \vdash b ) \dashv a
  \\
  &\quad
  +
  a \vdash ( b \dashv ( a \dashv a + a \vdash a ) + ( a \dashv a + a \vdash a ) \vdash b )
  \\
  &=
  ( b a^2 ) a.
  \end{align*}

\begin{remark}
The (right) quasi-Jordan product does not satisfy $a^2 ( a b ) = a ( a^2 b )$:
  \allowdisplaybreaks
  \begin{align*}
  &
  a^2 ( a b )
  =
  ( a \dashv a + a \vdash a ) \dashv ( a \dashv b + b \vdash a )
  +
  ( a \dashv b + b \vdash a ) \vdash ( a \dashv a + a \vdash a )
  \\
  &=
  ( a \dashv a ) \dashv ( a \dashv b )
  +
  ( a \dashv a ) \dashv ( b \vdash a )
  +
  ( a \vdash a ) \dashv ( a \dashv b )
  +
  ( a \vdash a ) \dashv ( b \vdash a )
  \\
  &\quad
  +
  ( a \dashv b ) \vdash ( a \dashv a )
  +
  ( a \dashv b ) \vdash ( a \vdash a )
  +
  ( b \vdash a ) \vdash ( a \dashv a )
  +
  ( b \vdash a ) \vdash ( a \vdash a )
  \\
  &=
  \widehat a a a b
  +
  \widehat a a b a
  +
  a \widehat a a b
  +
  a \widehat a b a
  +
  a b \widehat a a
  +
  a b a \widehat a
  +
  b a \widehat a a
  +
  b a a \widehat a,
  \\
  &
  a ( a^2 b )
  =
  a \dashv ( ( a \dashv a + a \vdash a ) \dashv b + b \vdash ( a \dashv a + a \vdash a ) )
  \\
  &\quad
  +
  ( ( a \dashv a + a \vdash a ) \dashv b + b \vdash ( a \dashv a + a \vdash a ) ) \vdash a
  \\
  &=
  a \dashv ( ( a \dashv a ) \dashv b )
  +
  a \dashv ( ( a \vdash a ) \dashv b )
  +
  a \dashv ( b \vdash ( a \dashv a ) )
  +
  a \dashv ( b \vdash ( a \vdash a ) )
  \\
  &\quad
  +
  ( ( a \dashv a ) \dashv b ) \vdash a
  +
  ( ( a \vdash a ) \dashv b ) \vdash a
  +
  ( b \vdash ( a \dashv a ) ) \vdash a
  +
  ( b \vdash ( a \vdash a ) ) \vdash a
  \\
  &=
  2 \widehat a a a b
  +
  2 \widehat a b a a
  +
  2 a a b \widehat a
  +
  2 b a a \widehat a.
  \end{align*}
\end{remark}

In this paper we determine a set of generators for the identities of degree $\le 4$
satisfied by the quasi-Jordan product over a field of characteristic 0. We may assume
that the identities are multilinear by Zhevlakov et al.~\cite{Zhevlakov}, Chapter 1. We
use the computer algebra system Maple \cite{Maple} for calculations with large
matrices.

\begin{theorem} \label{degree3theorem}
Every polynomial identity of degree $\le 3$ satisfied by the quasi-Jordan
product is a consequence of right commutativity.
\end{theorem}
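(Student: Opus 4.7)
The plan is to reduce to multilinear identities and then settle the degree-$3$ case by an explicit rank computation. By the characteristic-zero hypothesis and \cite{Zhevlakov}, it suffices to find all multilinear identities of each degree $n \le 3$. For such $n$, let $V_n$ denote the vector space spanned by the multilinear quasi-Jordan monomials in distinct variables $a_1, \ldots, a_n$, and let $W_n$ be the multilinear component of the free dialgebra on these variables. By Lemma \ref{freedialgebra}, the normal forms $a_{\sigma(1)} \cdots \widehat{a_{\sigma(k)}} \cdots a_{\sigma(n)}$ (for $\sigma \in S_n$ and $1 \le k \le n$) form a basis of $W_n$, giving $\dim W_n = n \cdot n!$. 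Expansion via $ab = a \dashv b + b \vdash a$ defines a linear map $E_n \colon V_n \to W_n$ whose kernel is precisely the space of multilinear identities of degree $n$ satisfied by the quasi-Jordan product.

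For $n = 1$ there is nothing to do. For $n = 2$ the two monomials $ab$ and $ba$ expand to $\widehat{a} b + b \widehat{a}$ and $\widehat{b} a + a \widehat{b}$, which are manifestly linearly independent in $W_2$, so $\ker E_2 = 0$. For $n = 3$ we have $\dim V_3 = 2 \cdot 6 = 12$ (two bracketings, six permutations of the variables) and $\dim W_3 = 18$. I would expand each of the twelve monomials $x_{\sigma(1)}(x_{\sigma(2)} x_{\sigma(3)})$ and $(x_{\sigma(1)} x_{\sigma(2)}) x_{\sigma(3)}$ using the calculation of $a(bc)$ displayed in the excerpt, together with the analogous calculation $(ab)c = \widehat{a} b c + b \widehat{a} c + c \widehat{a} b + c b \widehat{a}$, assemble the result into a $12 \times 18$ matrix, and compute its rank in Maple.

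The right commutativity identity, read as a multilinear identity in three distinct variables, yields exactly three linearly independent consequences $x_i ( x_j x_k ) - x_i ( x_k x_j )$ (one for each choice of outer variable $x_i$); they are independent since they involve pairwise disjoint pairs of basis monomials of $V_3$. By Lemma \ref{rightcommutative} all three lie in $\ker E_3$, so $\dim \ker E_3 \ge 3$ and $\operatorname{rank} E_3 \le 9$. The theorem then reduces to the reverse inequality $\operatorname{rank} E_3 \ge 9$, which amounts to exhibiting nine rows of the matrix that are linearly independent; the natural candidates are the six left-bracketed monomials $(x_{\sigma(1)} x_{\sigma(2)}) x_{\sigma(3)}$ together with the three right-bracketed monomials $x_i(x_j x_k)$ (one per choice of $x_i$). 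The only real obstacle is the bookkeeping of the twelve expansions; no conceptual difficulty remains beyond the routine linear algebra.
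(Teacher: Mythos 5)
Your proposal is correct and follows essentially the same route as the paper: expand the twelve degree-$3$ quasi-Jordan monomials into the eighteen normal-form dialgebra monomials, form the expansion matrix, and verify that its nullspace is exactly the $3$-dimensional span of the permuted right-commutativity identities (the paper records rank $9$ via the row canonical form in Table \ref{degree3matrix}, which is the same computation as your ``exhibit nine independent rows'' step). The only differences are cosmetic: you use the $12\times 18$ transpose of the paper's $18\times 12$ matrix, and you make the trivial degrees $1$ and $2$ explicit.
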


\begin{proof}
The two association types $(\ast\ast)\ast$, $\ast(\ast\ast)$ and six permutations
of $a,b,c$ give 12 nonassociative monomials, ordered as follows:
  \[
  (ab)c,
  (ac)b,
  (ba)c,
  (bc)a,
  (ca)b,
  (cb)a,
  a(bc),
  a(cb),
  b(ac),
  b(ca),
  c(ab),
  c(ba).
  \]
There are 18 dialgebra monomials, ordered as follows:
  \[
  \widehat abc,
  \widehat acb,
  \widehat bac,
  \widehat bca,
  \widehat cab,
  \widehat cba,
  a \widehat bc,
  a \widehat cb,
  b \widehat ac,
  b \widehat ca,
  c \widehat ab,
  c \widehat ba,
  ab \widehat c,
  ac \widehat b,
  ba \widehat c,
  bc \widehat a,
  ca \widehat b,
  cb \widehat a.
  \]
We expand each nonassociative monomial with the quasi-Jordan product:
  \allowdisplaybreaks
  \begin{align*}
  (ab)c
  &=
  ( a \dashv b + b \vdash a ) \dashv c + c \vdash ( a \dashv b + b \vdash a )
  =
  \widehat abc + b \widehat ac + c \widehat ab + cb \widehat a,
  \\
  a(bc)
  &=
  a \dashv ( b \dashv c + c \vdash b ) + ( b \dashv c + c \vdash b ) \vdash a
  =
  \widehat abc + \widehat acb + bc \widehat a + cb \widehat a.
  \end{align*}
By permutation of $a,b,c$ we obtain the remaining expansions:
  \allowdisplaybreaks
  \begin{alignat*}{2}
  ( a c ) b &= \widehat acb + c \widehat ab + b \widehat ac + bc \widehat a,
  &\qquad
  a ( c b ) &= \widehat acb + \widehat abc + cb \widehat a + bc \widehat a,
  \\
  ( b a ) c &= \widehat bac + a \widehat bc + c \widehat ba + ca \widehat b,
  &\qquad
  b ( a c ) &= \widehat bac + \widehat bca + ac \widehat b + ca \widehat b,
  \\
  ( b c ) a &= \widehat bca + c \widehat ba + a \widehat bc + ac \widehat b,
  &\qquad
  b ( c a ) &= \widehat bca + \widehat bac + ca \widehat b + ac \widehat b,
  \\
  ( c a ) b &= \widehat cab + a \widehat cb + b \widehat ca + ba \widehat c,
  &\qquad
  c ( a b ) &= \widehat cab + \widehat cba + ab \widehat c + ba \widehat c,
  \\
  ( c b ) a &= \widehat cba + b \widehat ca + a \widehat cb + ab \widehat c,
  &\qquad
  c ( b a ) &= \widehat cba + \widehat cab + ba \widehat c + ab \widehat c.
  \end{alignat*}
Let $E_3$ be the $18 \times 12$ matrix whose $(i,j)$ entry is the coefficient
of the $i$-th dialgebra monomial in the expansion of the $j$-th nonassociative
monomial. The nullspace of $E_3$ consists of the identities in degree 3
satisfied by the quasi-Jordan product. The matrix $E_3$ is on the left side of
Table \ref{degree3matrix} ($.$ for $0$); on the right side is the row canonical
form (upper block) and the canonical basis of the nullspace (lower block). The
nullspace basis vectors are the coefficient vectors of the permutations of the
right commutative identity: $- a ( b c ) + a ( c b )$, $- b ( a c ) + b ( c a
)$, $- c ( a b ) + c ( b a )$.
\end{proof}

  \begin{table}[t] \tiny
  \[
  \left[
  \begin{array}{cccccccccccc}
  1 &\!\! . &\!\! . &\!\! . &\!\! . &\!\! . &\!\! 1 &\!\! 1 &\!\! . &\!\! . &\!\! . &\!\! . \\
  . &\!\! 1 &\!\! . &\!\! . &\!\! . &\!\! . &\!\! 1 &\!\! 1 &\!\! . &\!\! . &\!\! . &\!\! . \\
  . &\!\! . &\!\! 1 &\!\! . &\!\! . &\!\! . &\!\! . &\!\! . &\!\! 1 &\!\! 1 &\!\! . &\!\! . \\
  . &\!\! . &\!\! . &\!\! 1 &\!\! . &\!\! . &\!\! . &\!\! . &\!\! 1 &\!\! 1 &\!\! . &\!\! . \\
  . &\!\! . &\!\! . &\!\! . &\!\! 1 &\!\! . &\!\! . &\!\! . &\!\! . &\!\! . &\!\! 1 &\!\! 1 \\
  . &\!\! . &\!\! . &\!\! . &\!\! . &\!\! 1 &\!\! . &\!\! . &\!\! . &\!\! . &\!\! 1 &\!\! 1 \\
  . &\!\! . &\!\! 1 &\!\! 1 &\!\! . &\!\! . &\!\! . &\!\! . &\!\! . &\!\! . &\!\! . &\!\! . \\
  . &\!\! . &\!\! . &\!\! . &\!\! 1 &\!\! 1 &\!\! . &\!\! . &\!\! . &\!\! . &\!\! . &\!\! . \\
  1 &\!\! 1 &\!\! . &\!\! . &\!\! . &\!\! . &\!\! . &\!\! . &\!\! . &\!\! . &\!\! . &\!\! . \\
  . &\!\! . &\!\! . &\!\! . &\!\! 1 &\!\! 1 &\!\! . &\!\! . &\!\! . &\!\! . &\!\! . &\!\! . \\
  1 &\!\! 1 &\!\! . &\!\! . &\!\! . &\!\! . &\!\! . &\!\! . &\!\! . &\!\! . &\!\! . &\!\! . \\
  . &\!\! . &\!\! 1 &\!\! 1 &\!\! . &\!\! . &\!\! . &\!\! . &\!\! . &\!\! . &\!\! . &\!\! . \\
  . &\!\! . &\!\! . &\!\! . &\!\! . &\!\! 1 &\!\! . &\!\! . &\!\! . &\!\! . &\!\! 1 &\!\! 1 \\
  . &\!\! . &\!\! . &\!\! 1 &\!\! . &\!\! . &\!\! . &\!\! . &\!\! 1 &\!\! 1 &\!\! . &\!\! . \\
  . &\!\! . &\!\! . &\!\! . &\!\! 1 &\!\! . &\!\! . &\!\! . &\!\! . &\!\! . &\!\! 1 &\!\! 1 \\
  . &\!\! 1 &\!\! . &\!\! . &\!\! . &\!\! . &\!\! 1 &\!\! 1 &\!\! . &\!\! . &\!\! . &\!\! . \\
  . &\!\! . &\!\! 1 &\!\! . &\!\! . &\!\! . &\!\! . &\!\! . &\!\! 1 &\!\! 1 &\!\! . &\!\! . \\
  1 &\!\! . &\!\! . &\!\! . &\!\! . &\!\! . &\!\! 1 &\!\! 1 &\!\! . &\!\! . &\!\! . &\!\! .
  \end{array}
  \right]
  \quad
  \left[
  \begin{array}{cccccccccccc}
  1 &\!\! . &\!\! . &\!\! . &\!\! . &\!\! . &\!\! . &\!\! . &\!\! . &\!\! . &\!\! . &\!\! . \\
  . &\!\! 1 &\!\! . &\!\! . &\!\! . &\!\! . &\!\! . &\!\! . &\!\! . &\!\! . &\!\! . &\!\! . \\
  . &\!\! . &\!\! 1 &\!\! . &\!\! . &\!\! . &\!\! . &\!\! . &\!\! . &\!\! . &\!\! . &\!\! . \\
  . &\!\! . &\!\! . &\!\! 1 &\!\! . &\!\! . &\!\! . &\!\! . &\!\! . &\!\! . &\!\! . &\!\! . \\
  . &\!\! . &\!\! . &\!\! . &\!\! 1 &\!\! . &\!\! . &\!\! . &\!\! . &\!\! . &\!\! . &\!\! . \\
  . &\!\! . &\!\! . &\!\! . &\!\! . &\!\! 1 &\!\! . &\!\! . &\!\! . &\!\! . &\!\! . &\!\! . \\
  . &\!\! . &\!\! . &\!\! . &\!\! . &\!\! . &\!\! 1 &\!\! 1 &\!\! . &\!\! . &\!\! . &\!\! . \\
  . &\!\! . &\!\! . &\!\! . &\!\! . &\!\! . &\!\! . &\!\! . &\!\! 1 &\!\! 1 &\!\! . &\!\! . \\
  . &\!\! . &\!\! . &\!\! . &\!\! . &\!\! . &\!\! . &\!\! . &\!\! . &\!\! . &\!\! 1 &\!\! 1 \\
  \hline
  . &\!\! . &\!\! . &\!\! . &\!\! . &\!\! . &\!\! -1 &\!\! 1 &\!\!  . &\!\! . &\!\!  . &\!\! . \\
  . &\!\! . &\!\! . &\!\! . &\!\! . &\!\! . &\!\!  . &\!\! . &\!\! -1 &\!\! 1 &\!\!  . &\!\! . \\
  . &\!\! . &\!\! . &\!\! . &\!\! . &\!\! . &\!\!  . &\!\! . &\!\!  . &\!\! . &\!\! -1 &\!\! 1
  \end{array}
  \right]
  \]
  \caption{The matrix $E_3$, its row canonical form and nullspace basis}
  \label{degree3matrix}
  \end{table}

\begin{theorem} \label{degree4theorem}
Every polynomial identity of degree $\le 4$ satisfied by the quasi-Jordan
product is a consequence of right commutativity and these identities in degree
4:
  \allowdisplaybreaks
  \begin{align*}
  J &=
  ( a ( b c ) ) d
  +
  ( a ( b d ) ) c
  +
  ( a ( c d ) ) b
  -
  ( a b ) ( c d )
  -
  ( a c ) ( b d )
  -
  ( a d ) ( b c ),
  \\
  K &=
  ( ( a b ) d ) c
  +
  ( ( a c ) d ) b
  -
  ( a ( b c ) ) d
  -
  ( a ( b d ) ) c
  -
  ( a ( c d ) ) b
  +
  a ( ( b c ) d ).
  \end{align*}
The identities $J$ and $K$ are independent: neither implies the other.
\end{theorem}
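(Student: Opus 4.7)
The plan is to repeat the computational method of Theorem \ref{degree3theorem} in degree $4$. The five binary association types $((**)*)*$, $(*(**))*$, $(**)(**)$, $*((**)*)$, $*(*(**))$, together with the $24$ permutations of $\{a,b,c,d\}$, give $5 \cdot 24 = 120$ nonassociative monomials. By Lemma \ref{freedialgebra}, the free dialgebra on $\{a,b,c,d\}$ has $4 \cdot 24 = 96$ basis monomials in degree $4$ (one hat position per permutation). I would expand each nonassociative monomial via the quasi-Jordan product, reduce to normal form using Lemma \ref{dimonoidcalculus}, and assemble the resulting $96 \times 120$ matrix $E_4$. Its nullspace is the space of all multilinear identities of degree $4$ satisfied by the quasi-Jordan product, and its dimension can be computed exactly in Maple via row reduction over $\mathbb{Q}$.

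To describe this nullspace, I would generate a conjectural spanning set in two stages. First, all degree-$4$ consequences of right commutativity $R(a,b,c) = a(bc) - a(cb)$ are obtained by the standard liftings: substitute a product $vd$ or $dv$ for one of $a,b,c$ in $R$, or multiply $R$ on the left or right by $d$, giving $8$ degree-$4$ identities; then apply all $24$ permutations of $\{a,b,c,d\}$. Second, append the $S_4$-orbits of the coefficient vectors of $J$ and $K$. Let $G$ be the matrix whose rows are all these vectors in $\mathbb{Q}^{120}$. The first claim of the theorem reduces to the single rank comparison $\operatorname{rank} G = \dim \ker E_4$, which I would verify computationally.

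For the independence statement, let $G_J$ be the submatrix of $G$ obtained by deleting the rows coming from permutations of $K$, and let $G_K$ be the submatrix obtained by deleting the rows from permutations of $J$. Then $K$ is not a consequence of right commutativity together with $J$ iff $\operatorname{rank} G_J < \operatorname{rank} G$, and symmetrically $J$ is not a consequence of right commutativity together with $K$ iff $\operatorname{rank} G_K < \operatorname{rank} G$. Each assertion is again verified by a single rank computation over $\mathbb{Q}$.

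The main obstacle is not conceptual but a matter of discovery: finding the explicit generators $J$ and $K$ requires first computing a basis of the quotient $\ker E_4 / \operatorname{rowspan}(R_4)$, where $R_4$ is the lifting matrix for right commutativity, and then recognizing inside it two short, $S_4$-orbit-representative elements with interpretable meaning (one Jordan-type, one an associator-derivation identity). The expansion of each nonassociative monomial into normal form is a routine application of the dialgebra axioms exactly as carried out in the degree-$3$ proof and in the remark preceding it; the only non-trivial input is the rank and nullspace arithmetic on a matrix of moderate size, which is well within the capabilities of Maple.
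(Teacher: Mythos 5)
Your method is sound and is in essence the computation the paper performs, but the two are organized differently. The paper folds right commutativity into the monomial basis \emph{before} expanding: it notes that right commutativity eliminates the type ${\ast}({\ast}({\ast}{\ast}))$ and forces the symmetries $(a(bc))d = (a(cb))d$, $(ab)(cd) = (ab)(dc)$, $a((bc)d) = a((cb)d)$, so it works with only $24+12+12+12 = 60$ monomials and a $96 \times 60$ matrix $E_4$ of rank $44$ and nullity $16$. That $16$-dimensional nullspace consists precisely of the identities that are new modulo right commutativity, and it visibly decomposes as the direct sum of a $4$-dimensional $S_4$-module generated by $J$ (which the paper further identifies with the linearized right quasi-Jordan identity) and a $12$-dimensional $S_4$-module generated by $K$; the zero intersection of these two submodules is the paper's entire proof of independence. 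Your version keeps all $120$ monomials, so your $E_4$ has nullity $76 = 60 + 16$, and you must separately assemble the module of degree-$4$ liftings of right commutativity (your eight liftings, closed under the $24$ permutations, are indeed the complete set) and then settle everything by rank comparisons. The two routes are equivalent: yours avoids having to justify that the $60$ reduced monomials span the right-commutative quotient, at the cost of larger matrices and an extra generating-set computation, while the paper's pre-reduction makes the new identities appear directly as a small nullspace and makes the $J$/$K$ decomposition, hence the independence claim, immediate. Your independence criteria $\operatorname{rank} G_J < \operatorname{rank} G$ and $\operatorname{rank} G_K < \operatorname{rank} G$ are valid because the relevant row spans are $S_4$-invariant, so excluding one permutation of $K$ (resp.\ $J$) excludes its whole orbit.
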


\begin{proof}
The 5 association types are $(({\ast}{\ast}){\ast}){\ast}$,
$({\ast}({\ast}{\ast})){\ast}$, $({\ast}{\ast})({\ast}{\ast})$,
${\ast}(({\ast}{\ast}){\ast})$, ${\ast}({\ast}({\ast}{\ast}))$. Right
commutativity eliminates type 5, $a(b(cd)) = a((cd)b)$, and implies symmetries
in types 1--4: $((ab)c)d$ has no symmetry, but $(a(bc))d = (a(cb))d$, $(ab)(cd)
= (ab)(dc)$, and $a((bc)d) = a((cb)d)$ since $a((bc)d) = a(d(bc)) = a(d(cb)) =
a((cb)d)$. Each symmetry divides the number of monomials by 2, so types 1, 2,
3, 4 have 24, 12, 12, 12 monomials. We order these 60 monomials by association
type and by lex order of the permutation (Table \ref{nonassociativemonomials}).
There are 96 dialgebra monomials: in each permutation each factor can be the
center. We order these monomials by the position of the center and by lex order
of the permutation (Table \ref{dialgebramonomials}). We expand each
nonassociative monomial and replace each dialgebra term by its normal form:
  \allowdisplaybreaks
  \begin{align*}
  &
  ((ab)c)d
  \\
  &=
  ( ( a \dashv b ) \dashv c ) \dashv d
  +
  d \vdash ( ( a \dashv b ) \dashv c )
  +
  ( c \vdash ( a \dashv b ) ) \dashv d
  +
  d \vdash ( c \vdash ( a \dashv b ) )
  \\
  &\quad
  +
  ( ( b \vdash a ) \dashv c ) \dashv d
  +
  d \vdash ( ( b \vdash a ) \dashv c )
  +
  ( c \vdash ( b \vdash a ) ) \dashv d
  +
  d \vdash ( c \vdash ( b \vdash a ) )
  \\
  &=
  \widehat{a} b c d
  +
  d \widehat{a} b c
  +
  c \widehat{a} b d
  +
  d c \widehat{a} b
  +
  b \widehat{a} c d
  +
  d b \widehat{a} c
  +
  c b \widehat{a} d
  +
  d c b \widehat{a},
  \\
  &
  (a(bc))d
  \\
  &=
  ( a \dashv ( b \dashv c ) ) \dashv d
  +
  d \vdash ( a \dashv ( b \dashv c ) )
  +
  ( ( b \dashv c ) \vdash a ) \dashv d
  +
  d \vdash ( ( b \dashv c ) \vdash a )
  \\
  &\quad
  +
  ( a \dashv ( c \vdash b ) ) \dashv d
  +
  d \vdash ( a \dashv ( c \vdash b ) )
  +
  ( ( c \vdash b ) \vdash a ) \dashv d
  +
  d \vdash ( ( c \vdash b ) \vdash a )
  \\
  &=
  \widehat{a} b c d
  +
  d \widehat{a} b c
  +
  b c \widehat{a} d
  +
  d b c \widehat{a}
  +
  \widehat{a} c b d
  +
  d \widehat{a} c b
  +
  c b \widehat{a} d
  +
  d c b \widehat{a},
  \\
  &
  (ab)(cd)
  \\
  &=
  ( a \dashv b ) \dashv ( c \dashv d )
  +
  ( c \dashv d ) \vdash ( a \dashv b )
  +
  ( a \dashv b ) \dashv ( d \vdash c )
  +
  ( d \vdash c ) \vdash ( a \dashv b )
  \\
  &\quad
  +
  ( b \vdash a ) \dashv ( c \dashv d )
  +
  ( c \dashv d ) \vdash ( b \vdash a )
  +
  ( b \vdash a ) \dashv ( d \vdash c )
  +
  ( d \vdash c ) \vdash ( b \vdash a )
  \\
  &=
  \widehat{a} b c d
  +
  c d \widehat{a} b
  +
  \widehat{a} b d c
  +
  d c \widehat{a} b
  +
  b \widehat{a} c d
  +
  c d b \widehat{a}
  +
  b \widehat{a} d c
  +
  d c b \widehat{a},
  \\
  &
  a((bc)d)
  \\
  &=
  a \dashv ( ( b \dashv c ) \dashv d )
  +
  ( ( b \dashv c ) \dashv d ) \vdash a
  +
  a \dashv ( d \vdash ( b \dashv c ) )
  +
  ( d \vdash ( b \dashv c ) ) \vdash a
  \\
  &\quad
  +
  a \dashv ( ( c \vdash b ) \dashv d )
  +
  ( ( c \vdash b ) \dashv d ) \vdash a
  +
  a \dashv ( d \vdash ( c \vdash b ) )
  +
  ( d \vdash ( c \vdash b ) ) \vdash a
  \\
  &=
  \widehat{a} b c d
  +
  b c d \widehat{a}
  +
  \widehat{a} d b c
  +
  d b c \widehat{a}
  +
  \widehat{a} c b d
  +
  c b d \widehat{a}
  +
  \widehat{a} d c b
  +
  d c b \widehat{a}.
  \end{align*}
Because of its large size, we omit the $96 \times 60$ matrix $E_4$ whose $(i,j)$ entry
is the coefficient of the $i$-th dialgebra monomial in the expansion of the $j$-th
nonassociative monomial. Table \ref{degree4expansionrcfnull} gives the row canonical
form of $E_4$ in the upper block ($+$ for 1, $-$ for $-1$), and the canonical basis of
the nullspace of $E_4$ in the lower block.
The rank is 44 and the nullity is 16. Nullspace basis vectors 1--4 represent
the first 4 identities in Table \ref{nullspacebasis} which span an
$S_4$-module generated by the identity $J$. In $J$, if we (simultaneously)
replace $a$ by $b$, and $b,c,d$ by $a$, then we obtain $3 ( b a^2 ) a - 3 ( b a
) a^2$; this is equivalent to the right quasi-Jordan identity (since
$\mathrm{char}\,\mathbb{F} \ne 3$). If we linearize the right quasi-Jordan
identity and apply right commutativity then we obtain
  \allowdisplaybreaks
  \begin{align*}
  &
  ( a ( b c ) ) d
  +
  ( a ( b d ) ) c
  +
  ( a ( c b ) ) d
  +
  ( a ( c d ) ) b
  +
  ( a ( d b ) ) c
  +
  ( a ( d c ) ) b
  \\
  &\quad
  -
  ( a b ) ( c d )
  -
  ( a b ) ( d c )
  -
  ( a c ) ( b d )
  -
  ( a c ) ( d b )
  -
  ( a d ) ( b c )
  -
  ( a d ) ( c b )
  \\
  &=
  2 ( a ( b c ) ) d
  +
  2 ( a ( b d ) ) c
  +
  2 ( a ( c d ) ) b
  -
  2 ( a b ) ( c d )
  -
  2 ( a c ) ( b d )
  -
  2 ( a d ) ( b c );
  \end{align*}
this is equivalent to identity $J$ (since $\mathrm{char}\,\mathbb{F} \ne 2$).
Nullspace basis vectors 5--16 represent the last 12 identities in Table
\ref{nullspacebasis} which span an $S_4$-module generated by the identity $K$.
The $S_4$-submodules generated by $J$ and $K$ have zero intersection, and so
$J$ and $K$ are independent.
\end{proof}

  \begin{table} \small
  {
  \[
  \begin{array}{llllllll}
  ((ab)c)d & ((ab)d)c & ((ac)b)d & ((ac)d)b &
  ((ad)b)c & ((ad)c)b & ((ba)c)d & ((ba)d)c \\
  ((bc)a)d & ((bc)d)a & ((bd)a)c & ((bd)c)a &
  ((ca)b)d & ((ca)d)b & ((cb)a)d & ((cb)d)a \\
  ((cd)a)b & ((cd)b)a & ((da)b)c & ((da)c)b &
  ((db)a)c & ((db)c)a & ((dc)a)b & ((dc)b)a \\
  (a(bc))d & (a(bd))c & (a(cd))b & (b(ac))d &
  (b(ad))c & (b(cd))a & (c(ab))d & (c(ad))b \\
  (c(bd))a & (d(ab))c & (d(ac))b & (d(bc))a &
  (ab)(cd) & (ac)(bd) & (ad)(bc) & (ba)(cd) \\
  (bc)(ad) & (bd)(ac) & (ca)(bd) & (cb)(ad) &
  (cd)(ab) & (da)(bc) & (db)(ac) & (dc)(ab) \\
  a((bc)d) & a((bd)c) & a((cd)b) & b((ac)d) &
  b((ad)c) & b((cd)a) & c((ab)d) & c((ad)b) \\
  c((bd)a) & d((ab)c) & d((ac)b) & d((bc)a).
  \end{array}
  \]
  \caption{Nonassociative monomials in degree 4}
  \label{nonassociativemonomials}
  }
  {
  \[
  \begin{array}{llllllllllll}
  \\
  \widehat{a}bcd & \widehat{a}bdc & \widehat{a}cbd &
  \widehat{a}cdb & \widehat{a}dbc & \widehat{a}dcb &
  \widehat{b}acd & \widehat{b}adc & \widehat{b}cad &
  \widehat{b}cda & \widehat{b}dac & \widehat{b}dca \\
  \widehat{c}abd & \widehat{c}adb & \widehat{c}bad &
  \widehat{c}bda & \widehat{c}dab & \widehat{c}dba &
  \widehat{d}abc & \widehat{d}acb & \widehat{d}bac &
  \widehat{d}bca & \widehat{d}cab & \widehat{d}cba \\
  a\widehat{b}cd & a\widehat{b}dc & a\widehat{c}bd &
  a\widehat{c}db & a\widehat{d}bc & a\widehat{d}cb &
  b\widehat{a}cd & b\widehat{a}dc & b\widehat{c}ad &
  b\widehat{c}da & b\widehat{d}ac & b\widehat{d}ca \\
  c\widehat{a}bd & c\widehat{a}db & c\widehat{b}ad &
  c\widehat{b}da & c\widehat{d}ab & c\widehat{d}ba &
  d\widehat{a}bc & d\widehat{a}cb & d\widehat{b}ac &
  d\widehat{b}ca & d\widehat{c}ab & d\widehat{c}ba \\
  ab\widehat{c}d & ab\widehat{d}c & ac\widehat{b}d &
  ac\widehat{d}b & ad\widehat{b}c & ad\widehat{c}b &
  ba\widehat{c}d & ba\widehat{d}c & bc\widehat{a}d &
  bc\widehat{d}a & bd\widehat{a}c & bd\widehat{c}a \\
  ca\widehat{b}d & ca\widehat{d}b & cb\widehat{a}d &
  cb\widehat{d}a & cd\widehat{a}b & cd\widehat{b}a &
  da\widehat{b}c & da\widehat{c}b & db\widehat{a}c &
  db\widehat{c}a & dc\widehat{a}b & dc\widehat{b}a \\
  abc\widehat{d} & abd\widehat{c} & acb\widehat{d} &
  acd\widehat{b} & adb\widehat{c} & adc\widehat{b} &
  bac\widehat{d} & bad\widehat{c} & bca\widehat{d} &
  bcd\widehat{a} & bda\widehat{c} & bdc\widehat{a} \\
  cab\widehat{d} & cad\widehat{b} & cba\widehat{d} &
  cbd\widehat{a} & cda\widehat{b} & cdb\widehat{a} &
  dab\widehat{c} & dac\widehat{b} & dba\widehat{c} &
  dbc\widehat{a} & dca\widehat{b} & dcb\widehat{a}.
  \end{array}
  \]
  \caption{Dialgebra monomials in degree 4}
  \label{dialgebramonomials}
  }
  \end{table}

  \begin{table}
  \begin{center} \tiny
  \begin{alignat*}{60}
  &  + & &\,. & &\,. & &\,. & &\,. & &\,. & &\,. & &\,. & &\,. & &\,. & &\,. & &\,. &
  &\,. & &\,. & &\,. & &\,. & &\,. & &\,. & &\,. & &\,. & &\,. & &\,. & &\,. & &\,. &
  &\,. & &\,. & &\,. & &\,. & &\,. & &\,. & &\,. & &\,. & &\,. & &\,. & &\,. & &\,. &
  &\,. & &\,. & &\,. & &\,. & &\,. & &\,. & &\,. & &\,. & &\,. & &\,. & &\,. & &\,. &
  &\,. & &  - & &\,. & &\,. & &\,. & &\,. & &\,. & &\,. & &\,. & &\,. & &\,. & &\,.
  \\[-2pt] 
  &\,. & &  + & &\,. & &\,. & &\,. & &\,. & &\,. & &\,. & &\,. & &\,. & &\,. & &\,. &
  &\,. & &\,. & &\,. & &\,. & &\,. & &\,. & &\,. & &\,. & &\,. & &\,. & &\,. & &\,. &
  &\,. & &\,. & &\,. & &\,. & &\,. & &\,. & &\,. & &\,. & &\,. & &\,. & &\,. & &\,. &
  &\,. & &\,. & &\,. & &\,. & &\,. & &\,. & &\,. & &\,. & &\,. & &\,. & &\,. & &\,. &
  &  - & &\,. & &\,. & &\,. & &\,. & &\,. & &\,. & &\,. & &\,. & &\,. & &\,. & &\,.
  \\[-2pt] 
  &\,. & &\,. & &  + & &\,. & &\,. & &\,. & &\,. & &\,. & &\,. & &\,. & &\,. & &\,. &
  &\,. & &\,. & &\,. & &\,. & &\,. & &\,. & &\,. & &\,. & &\,. & &\,. & &\,. & &\,. &
  &\,. & &\,. & &\,. & &\,. & &\,. & &\,. & &\,. & &\,. & &\,. & &\,. & &\,. & &\,. &
  &\,. & &\,. & &\,. & &\,. & &\,. & &\,. & &\,. & &\,. & &\,. & &\,. & &\,. & &\,. &
  &\,. & &\,. & &  - & &\,. & &\,. & &\,. & &\,. & &\,. & &\,. & &\,. & &\,. & &\,.
  \\[-2pt] 
  &\,. & &\,. & &\,. & &  + & &\,. & &\,. & &\,. & &\,. & &\,. & &\,. & &\,. & &\,. &
  &\,. & &\,. & &\,. & &\,. & &\,. & &\,. & &\,. & &\,. & &\,. & &\,. & &\,. & &\,. &
  &\,. & &\,. & &\,. & &\,. & &\,. & &\,. & &\,. & &\,. & &\,. & &\,. & &\,. & &\,. &
  &\,. & &\,. & &\,. & &\,. & &\,. & &\,. & &\,. & &\,. & &\,. & &\,. & &\,. & &\,. &
  &  - & &\,. & &\,. & &\,. & &\,. & &\,. & &\,. & &\,. & &\,. & &\,. & &\,. & &\,.
  \\[-2pt] 
  &\,. & &\,. & &\,. & &\,. & &  + & &\,. & &\,. & &\,. & &\,. & &\,. & &\,. & &\,. &
  &\,. & &\,. & &\,. & &\,. & &\,. & &\,. & &\,. & &\,. & &\,. & &\,. & &\,. & &\,. &
  &\,. & &\,. & &\,. & &\,. & &\,. & &\,. & &\,. & &\,. & &\,. & &\,. & &\,. & &\,. &
  &\,. & &\,. & &\,. & &\,. & &\,. & &\,. & &\,. & &\,. & &\,. & &\,. & &\,. & &\,. &
  &\,. & &\,. & &  - & &\,. & &\,. & &\,. & &\,. & &\,. & &\,. & &\,. & &\,. & &\,.
  \\[-2pt] 
  &\,. & &\,. & &\,. & &\,. & &\,. & &  + & &\,. & &\,. & &\,. & &\,. & &\,. & &\,. &
  &\,. & &\,. & &\,. & &\,. & &\,. & &\,. & &\,. & &\,. & &\,. & &\,. & &\,. & &\,. &
  &\,. & &\,. & &\,. & &\,. & &\,. & &\,. & &\,. & &\,. & &\,. & &\,. & &\,. & &\,. &
  &\,. & &\,. & &\,. & &\,. & &\,. & &\,. & &\,. & &\,. & &\,. & &\,. & &\,. & &\,. &
  &\,. & &  - & &\,. & &\,. & &\,. & &\,. & &\,. & &\,. & &\,. & &\,. & &\,. & &\,.
  \\[-2pt] 
  &\,. & &\,. & &\,. & &\,. & &\,. & &\,. & &  + & &\,. & &\,. & &\,. & &\,. & &\,. &
  &\,. & &\,. & &\,. & &\,. & &\,. & &\,. & &\,. & &\,. & &\,. & &\,. & &\,. & &\,. &
  &\,. & &\,. & &\,. & &\,. & &\,. & &\,. & &\,. & &\,. & &\,. & &\,. & &\,. & &\,. &
  &\,. & &\,. & &\,. & &\,. & &\,. & &\,. & &\,. & &\,. & &\,. & &\,. & &\,. & &\,. &
  &\,. & &\,. & &\,. & &\,. & &  - & &\,. & &\,. & &\,. & &\,. & &\,. & &\,. & &\,.
  \\[-2pt] 
  &\,. & &\,. & &\,. & &\,. & &\,. & &\,. & &\,. & &  + & &\,. & &\,. & &\,. & &\,. &
  &\,. & &\,. & &\,. & &\,. & &\,. & &\,. & &\,. & &\,. & &\,. & &\,. & &\,. & &\,. &
  &\,. & &\,. & &\,. & &\,. & &\,. & &\,. & &\,. & &\,. & &\,. & &\,. & &\,. & &\,. &
  &\,. & &\,. & &\,. & &\,. & &\,. & &\,. & &\,. & &\,. & &\,. & &\,. & &\,. & &\,. &
  &\,. & &\,. & &\,. & &  - & &\,. & &\,. & &\,. & &\,. & &\,. & &\,. & &\,. & &\,.
  \\[-2pt] 
  &\,. & &\,. & &\,. & &\,. & &\,. & &\,. & &\,. & &\,. & &  + & &\,. & &\,. & &\,. &
  &\,. & &\,. & &\,. & &\,. & &\,. & &\,. & &\,. & &\,. & &\,. & &\,. & &\,. & &\,. &
  &\,. & &\,. & &\,. & &\,. & &\,. & &\,. & &\,. & &\,. & &\,. & &\,. & &\,. & &\,. &
  &\,. & &\,. & &\,. & &\,. & &\,. & &\,. & &\,. & &\,. & &\,. & &\,. & &\,. & &\,. &
  &\,. & &\,. & &\,. & &\,. & &\,. & &  - & &\,. & &\,. & &\,. & &\,. & &\,. & &\,.
  \\[-2pt] 
  &\,. & &\,. & &\,. & &\,. & &\,. & &\,. & &\,. & &\,. & &\,. & &  + & &\,. & &\,. &
  &\,. & &\,. & &\,. & &\,. & &\,. & &\,. & &\,. & &\,. & &\,. & &\,. & &\,. & &\,. &
  &\,. & &\,. & &\,. & &\,. & &\,. & &\,. & &\,. & &\,. & &\,. & &\,. & &\,. & &\,. &
  &\,. & &\,. & &\,. & &\,. & &\,. & &\,. & &\,. & &\,. & &\,. & &\,. & &\,. & &\,. &
  &\,. & &\,. & &\,. & &  - & &\,. & &\,. & &\,. & &\,. & &\,. & &\,. & &\,. & &\,.
  \\[-2pt] 
  &\,. & &\,. & &\,. & &\,. & &\,. & &\,. & &\,. & &\,. & &\,. & &\,. & &  + & &\,. &
  &\,. & &\,. & &\,. & &\,. & &\,. & &\,. & &\,. & &\,. & &\,. & &\,. & &\,. & &\,. &
  &\,. & &\,. & &\,. & &\,. & &\,. & &\,. & &\,. & &\,. & &\,. & &\,. & &\,. & &\,. &
  &\,. & &\,. & &\,. & &\,. & &\,. & &\,. & &\,. & &\,. & &\,. & &\,. & &\,. & &\,. &
  &\,. & &\,. & &\,. & &\,. & &\,. & &  - & &\,. & &\,. & &\,. & &\,. & &\,. & &\,.
  \\[-2pt] 
  &\,. & &\,. & &\,. & &\,. & &\,. & &\,. & &\,. & &\,. & &\,. & &\,. & &\,. & &  + &
  &\,. & &\,. & &\,. & &\,. & &\,. & &\,. & &\,. & &\,. & &\,. & &\,. & &\,. & &\,. &
  &\,. & &\,. & &\,. & &\,. & &\,. & &\,. & &\,. & &\,. & &\,. & &\,. & &\,. & &\,. &
  &\,. & &\,. & &\,. & &\,. & &\,. & &\,. & &\,. & &\,. & &\,. & &\,. & &\,. & &\,. &
  &\,. & &\,. & &\,. & &\,. & &  - & &\,. & &\,. & &\,. & &\,. & &\,. & &\,. & &\,.
  \\[-2pt] 
  &\,. & &\,. & &\,. & &\,. & &\,. & &\,. & &\,. & &\,. & &\,. & &\,. & &\,. & &\,. &
  &  + & &\,. & &\,. & &\,. & &\,. & &\,. & &\,. & &\,. & &\,. & &\,. & &\,. & &\,. &
  &\,. & &\,. & &\,. & &\,. & &\,. & &\,. & &\,. & &\,. & &\,. & &\,. & &\,. & &\,. &
  &\,. & &\,. & &\,. & &\,. & &\,. & &\,. & &\,. & &\,. & &\,. & &\,. & &\,. & &\,. &
  &\,. & &\,. & &\,. & &\,. & &\,. & &\,. & &\,. & &  - & &\,. & &\,. & &\,. & &\,.
  \\[-2pt] 
  &\,. & &\,. & &\,. & &\,. & &\,. & &\,. & &\,. & &\,. & &\,. & &\,. & &\,. & &\,. &
  &\,. & &  + & &\,. & &\,. & &\,. & &\,. & &\,. & &\,. & &\,. & &\,. & &\,. & &\,. &
  &\,. & &\,. & &\,. & &\,. & &\,. & &\,. & &\,. & &\,. & &\,. & &\,. & &\,. & &\,. &
  &\,. & &\,. & &\,. & &\,. & &\,. & &\,. & &\,. & &\,. & &\,. & &\,. & &\,. & &\,. &
  &\,. & &\,. & &\,. & &\,. & &\,. & &\,. & &  - & &\,. & &\,. & &\,. & &\,. & &\,.
  \\[-2pt] 
  &\,. & &\,. & &\,. & &\,. & &\,. & &\,. & &\,. & &\,. & &\,. & &\,. & &\,. & &\,. &
  &\,. & &\,. & &  + & &\,. & &\,. & &\,. & &\,. & &\,. & &\,. & &\,. & &\,. & &\,. &
  &\,. & &\,. & &\,. & &\,. & &\,. & &\,. & &\,. & &\,. & &\,. & &\,. & &\,. & &\,. &
  &\,. & &\,. & &\,. & &\,. & &\,. & &\,. & &\,. & &\,. & &\,. & &\,. & &\,. & &\,. &
  &\,. & &\,. & &\,. & &\,. & &\,. & &\,. & &\,. & &\,. & &  - & &\,. & &\,. & &\,.
  \\[-2pt] 
  &\,. & &\,. & &\,. & &\,. & &\,. & &\,. & &\,. & &\,. & &\,. & &\,. & &\,. & &\,. &
  &\,. & &\,. & &\,. & &  + & &\,. & &\,. & &\,. & &\,. & &\,. & &\,. & &\,. & &\,. &
  &\,. & &\,. & &\,. & &\,. & &\,. & &\,. & &\,. & &\,. & &\,. & &\,. & &\,. & &\,. &
  &\,. & &\,. & &\,. & &\,. & &\,. & &\,. & &\,. & &\,. & &\,. & &\,. & &\,. & &\,. &
  &\,. & &\,. & &\,. & &\,. & &\,. & &\,. & &  - & &\,. & &\,. & &\,. & &\,. & &\,.
  \\[-2pt] 
  &\,. & &\,. & &\,. & &\,. & &\,. & &\,. & &\,. & &\,. & &\,. & &\,. & &\,. & &\,. &
  &\,. & &\,. & &\,. & &\,. & &  + & &\,. & &\,. & &\,. & &\,. & &\,. & &\,. & &\,. &
  &\,. & &\,. & &\,. & &\,. & &\,. & &\,. & &\,. & &\,. & &\,. & &\,. & &\,. & &\,. &
  &\,. & &\,. & &\,. & &\,. & &\,. & &\,. & &\,. & &\,. & &\,. & &\,. & &\,. & &\,. &
  &\,. & &\,. & &\,. & &\,. & &\,. & &\,. & &\,. & &\,. & &  - & &\,. & &\,. & &\,.
  \\[-2pt] 
  &\,. & &\,. & &\,. & &\,. & &\,. & &\,. & &\,. & &\,. & &\,. & &\,. & &\,. & &\,. &
  &\,. & &\,. & &\,. & &\,. & &\,. & &  + & &\,. & &\,. & &\,. & &\,. & &\,. & &\,. &
  &\,. & &\,. & &\,. & &\,. & &\,. & &\,. & &\,. & &\,. & &\,. & &\,. & &\,. & &\,. &
  &\,. & &\,. & &\,. & &\,. & &\,. & &\,. & &\,. & &\,. & &\,. & &\,. & &\,. & &\,. &
  &\,. & &\,. & &\,. & &\,. & &\,. & &\,. & &\,. & &  - & &\,. & &\,. & &\,. & &\,.
  \\[-2pt] 
  &\,. & &\,. & &\,. & &\,. & &\,. & &\,. & &\,. & &\,. & &\,. & &\,. & &\,. & &\,. &
  &\,. & &\,. & &\,. & &\,. & &\,. & &\,. & &  + & &\,. & &\,. & &\,. & &\,. & &\,. &
  &\,. & &\,. & &\,. & &\,. & &\,. & &\,. & &\,. & &\,. & &\,. & &\,. & &\,. & &\,. &
  &\,. & &\,. & &\,. & &\,. & &\,. & &\,. & &\,. & &\,. & &\,. & &\,. & &\,. & &\,. &
  &\,. & &\,. & &\,. & &\,. & &\,. & &\,. & &\,. & &\,. & &\,. & &\,. & &  - & &\,.
  \\[-2pt] 
  &\,. & &\,. & &\,. & &\,. & &\,. & &\,. & &\,. & &\,. & &\,. & &\,. & &\,. & &\,. &
  &\,. & &\,. & &\,. & &\,. & &\,. & &\,. & &\,. & &  + & &\,. & &\,. & &\,. & &\,. &
  &\,. & &\,. & &\,. & &\,. & &\,. & &\,. & &\,. & &\,. & &\,. & &\,. & &\,. & &\,. &
  &\,. & &\,. & &\,. & &\,. & &\,. & &\,. & &\,. & &\,. & &\,. & &\,. & &\,. & &\,. &
  &\,. & &\,. & &\,. & &\,. & &\,. & &\,. & &\,. & &\,. & &\,. & &  - & &\,. & &\,.
  \\[-2pt] 
  &\,. & &\,. & &\,. & &\,. & &\,. & &\,. & &\,. & &\,. & &\,. & &\,. & &\,. & &\,. &
  &\,. & &\,. & &\,. & &\,. & &\,. & &\,. & &\,. & &\,. & &  + & &\,. & &\,. & &\,. &
  &\,. & &\,. & &\,. & &\,. & &\,. & &\,. & &\,. & &\,. & &\,. & &\,. & &\,. & &\,. &
  &\,. & &\,. & &\,. & &\,. & &\,. & &\,. & &\,. & &\,. & &\,. & &\,. & &\,. & &\,. &
  &\,. & &\,. & &\,. & &\,. & &\,. & &\,. & &\,. & &\,. & &\,. & &\,. & &\,. & &  -
  \\[-2pt] 
  &\,. & &\,. & &\,. & &\,. & &\,. & &\,. & &\,. & &\,. & &\,. & &\,. & &\,. & &\,. &
  &\,. & &\,. & &\,. & &\,. & &\,. & &\,. & &\,. & &\,. & &\,. & &  + & &\,. & &\,. &
  &\,. & &\,. & &\,. & &\,. & &\,. & &\,. & &\,. & &\,. & &\,. & &\,. & &\,. & &\,. &
  &\,. & &\,. & &\,. & &\,. & &\,. & &\,. & &\,. & &\,. & &\,. & &\,. & &\,. & &\,. &
  &\,. & &\,. & &\,. & &\,. & &\,. & &\,. & &\,. & &\,. & &\,. & &  - & &\,. & &\,.
  \\[-2pt] 
  &\,. & &\,. & &\,. & &\,. & &\,. & &\,. & &\,. & &\,. & &\,. & &\,. & &\,. & &\,. &
  &\,. & &\,. & &\,. & &\,. & &\,. & &\,. & &\,. & &\,. & &\,. & &\,. & &  + & &\,. &
  &\,. & &\,. & &\,. & &\,. & &\,. & &\,. & &\,. & &\,. & &\,. & &\,. & &\,. & &\,. &
  &\,. & &\,. & &\,. & &\,. & &\,. & &\,. & &\,. & &\,. & &\,. & &\,. & &\,. & &\,. &
  &\,. & &\,. & &\,. & &\,. & &\,. & &\,. & &\,. & &\,. & &\,. & &\,. & &\,. & &  -
  \\[-2pt] 
  &\,. & &\,. & &\,. & &\,. & &\,. & &\,. & &\,. & &\,. & &\,. & &\,. & &\,. & &\,. &
  &\,. & &\,. & &\,. & &\,. & &\,. & &\,. & &\,. & &\,. & &\,. & &\,. & &\,. & &  + &
  &\,. & &\,. & &\,. & &\,. & &\,. & &\,. & &\,. & &\,. & &\,. & &\,. & &\,. & &\,. &
  &\,. & &\,. & &\,. & &\,. & &\,. & &\,. & &\,. & &\,. & &\,. & &\,. & &\,. & &\,. &
  &\,. & &\,. & &\,. & &\,. & &\,. & &\,. & &\,. & &\,. & &\,. & &\,. & &  - & &\,.
  \\[-2pt] 
  &\,. & &\,. & &\,. & &\,. & &\,. & &\,. & &\,. & &\,. & &\,. & &\,. & &\,. & &\,. &
  &\,. & &\,. & &\,. & &\,. & &\,. & &\,. & &\,. & &\,. & &\,. & &\,. & &\,. & &\,. &
  &  + & &\,. & &\,. & &\,. & &\,. & &\,. & &\,. & &\,. & &\,. & &\,. & &\,. & &\,. &
  &\,. & &\,. & &  + & &\,. & &\,. & &\,. & &\,. & &\,. & &\,. & &\,. & &\,. & &\,. &
  &  + & &  + & &  + & &\,. & &\,. & &\,. & &\,. & &\,. & &\,. & &\,. & &\,. & &\,.
  \\[-2pt] 
  &\,. & &\,. & &\,. & &\,. & &\,. & &\,. & &\,. & &\,. & &\,. & &\,. & &\,. & &\,. &
  &\,. & &\,. & &\,. & &\,. & &\,. & &\,. & &\,. & &\,. & &\,. & &\,. & &\,. & &\,. &
  &\,. & &  + & &\,. & &\,. & &\,. & &\,. & &\,. & &\,. & &\,. & &\,. & &\,. & &\,. &
  &\,. & &\,. & &  + & &\,. & &\,. & &\,. & &\,. & &\,. & &\,. & &\,. & &\,. & &\,. &
  &  + & &  + & &  + & &\,. & &\,. & &\,. & &\,. & &\,. & &\,. & &\,. & &\,. & &\,.
  \\[-2pt] 
  &\,. & &\,. & &\,. & &\,. & &\,. & &\,. & &\,. & &\,. & &\,. & &\,. & &\,. & &\,. &
  &\,. & &\,. & &\,. & &\,. & &\,. & &\,. & &\,. & &\,. & &\,. & &\,. & &\,. & &\,. &
  &\,. & &\,. & &  + & &\,. & &\,. & &\,. & &\,. & &\,. & &\,. & &\,. & &\,. & &\,. &
  &\,. & &\,. & &  + & &\,. & &\,. & &\,. & &\,. & &\,. & &\,. & &\,. & &\,. & &\,. &
  &  + & &  + & &  + & &\,. & &\,. & &\,. & &\,. & &\,. & &\,. & &\,. & &\,. & &\,.
  \\[-2pt] 
  &\,. & &\,. & &\,. & &\,. & &\,. & &\,. & &\,. & &\,. & &\,. & &\,. & &\,. & &\,. &
  &\,. & &\,. & &\,. & &\,. & &\,. & &\,. & &\,. & &\,. & &\,. & &\,. & &\,. & &\,. &
  &\,. & &\,. & &\,. & &  + & &\,. & &\,. & &\,. & &\,. & &\,. & &\,. & &\,. & &\,. &
  &\,. & &\,. & &\,. & &\,. & &\,. & &  + & &\,. & &\,. & &\,. & &\,. & &\,. & &\,. &
  &\,. & &\,. & &\,. & &  + & &  + & &  + & &\,. & &\,. & &\,. & &\,. & &\,. & &\,.
  \\[-2pt] 
  &\,. & &\,. & &\,. & &\,. & &\,. & &\,. & &\,. & &\,. & &\,. & &\,. & &\,. & &\,. &
  &\,. & &\,. & &\,. & &\,. & &\,. & &\,. & &\,. & &\,. & &\,. & &\,. & &\,. & &\,. &
  &\,. & &\,. & &\,. & &\,. & &  + & &\,. & &\,. & &\,. & &\,. & &\,. & &\,. & &\,. &
  &\,. & &\,. & &\,. & &\,. & &\,. & &  + & &\,. & &\,. & &\,. & &\,. & &\,. & &\,. &
  &\,. & &\,. & &\,. & &  + & &  + & &  + & &\,. & &\,. & &\,. & &\,. & &\,. & &\,.
  \\[-2pt] 
  &\,. & &\,. & &\,. & &\,. & &\,. & &\,. & &\,. & &\,. & &\,. & &\,. & &\,. & &\,. &
  &\,. & &\,. & &\,. & &\,. & &\,. & &\,. & &\,. & &\,. & &\,. & &\,. & &\,. & &\,. &
  &\,. & &\,. & &\,. & &\,. & &\,. & &  + & &\,. & &\,. & &\,. & &\,. & &\,. & &\,. &
  &\,. & &\,. & &\,. & &\,. & &\,. & &  + & &\,. & &\,. & &\,. & &\,. & &\,. & &\,. &
  &\,. & &\,. & &\,. & &  + & &  + & &  + & &\,. & &\,. & &\,. & &\,. & &\,. & &\,.
  \\[-2pt] 
  &\,. & &\,. & &\,. & &\,. & &\,. & &\,. & &\,. & &\,. & &\,. & &\,. & &\,. & &\,. &
  &\,. & &\,. & &\,. & &\,. & &\,. & &\,. & &\,. & &\,. & &\,. & &\,. & &\,. & &\,. &
  &\,. & &\,. & &\,. & &\,. & &\,. & &\,. & &  + & &\,. & &\,. & &\,. & &\,. & &\,. &
  &\,. & &\,. & &\,. & &\,. & &\,. & &\,. & &\,. & &\,. & &  + & &\,. & &\,. & &\,. &
  &\,. & &\,. & &\,. & &\,. & &\,. & &\,. & &  + & &  + & &  + & &\,. & &\,. & &\,.
  \\[-2pt] 
  &\,. & &\,. & &\,. & &\,. & &\,. & &\,. & &\,. & &\,. & &\,. & &\,. & &\,. & &\,. &
  &\,. & &\,. & &\,. & &\,. & &\,. & &\,. & &\,. & &\,. & &\,. & &\,. & &\,. & &\,. &
  &\,. & &\,. & &\,. & &\,. & &\,. & &\,. & &\,. & &  + & &\,. & &\,. & &\,. & &\,. &
  &\,. & &\,. & &\,. & &\,. & &\,. & &\,. & &\,. & &\,. & &  + & &\,. & &\,. & &\,. &
  &\,. & &\,. & &\,. & &\,. & &\,. & &\,. & &  + & &  + & &  + & &\,. & &\,. & &\,.
  \\[-2pt] 
  &\,. & &\,. & &\,. & &\,. & &\,. & &\,. & &\,. & &\,. & &\,. & &\,. & &\,. & &\,. &
  &\,. & &\,. & &\,. & &\,. & &\,. & &\,. & &\,. & &\,. & &\,. & &\,. & &\,. & &\,. &
  &\,. & &\,. & &\,. & &\,. & &\,. & &\,. & &\,. & &\,. & &  + & &\,. & &\,. & &\,. &
  &\,. & &\,. & &\,. & &\,. & &\,. & &\,. & &\,. & &\,. & &  + & &\,. & &\,. & &\,. &
  &\,. & &\,. & &\,. & &\,. & &\,. & &\,. & &  + & &  + & &  + & &\,. & &\,. & &\,.
  \\[-2pt] 
  &\,. & &\,. & &\,. & &\,. & &\,. & &\,. & &\,. & &\,. & &\,. & &\,. & &\,. & &\,. &
  &\,. & &\,. & &\,. & &\,. & &\,. & &\,. & &\,. & &\,. & &\,. & &\,. & &\,. & &\,. &
  &\,. & &\,. & &\,. & &\,. & &\,. & &\,. & &\,. & &\,. & &\,. & &  + & &\,. & &\,. &
  &\,. & &\,. & &\,. & &\,. & &\,. & &\,. & &\,. & &\,. & &\,. & &\,. & &\,. & &  + &
  &\,. & &\,. & &\,. & &\,. & &\,. & &\,. & &\,. & &\,. & &\,. & &  + & &  + & &  +
  \\[-2pt] 
  &\,. & &\,. & &\,. & &\,. & &\,. & &\,. & &\,. & &\,. & &\,. & &\,. & &\,. & &\,. &
  &\,. & &\,. & &\,. & &\,. & &\,. & &\,. & &\,. & &\,. & &\,. & &\,. & &\,. & &\,. &
  &\,. & &\,. & &\,. & &\,. & &\,. & &\,. & &\,. & &\,. & &\,. & &\,. & &  + & &\,. &
  &\,. & &\,. & &\,. & &\,. & &\,. & &\,. & &\,. & &\,. & &\,. & &\,. & &\,. & &  + &
  &\,. & &\,. & &\,. & &\,. & &\,. & &\,. & &\,. & &\,. & &\,. & &  + & &  + & &  +
  \\[-2pt] 
  &\,. & &\,. & &\,. & &\,. & &\,. & &\,. & &\,. & &\,. & &\,. & &\,. & &\,. & &\,. &
  &\,. & &\,. & &\,. & &\,. & &\,. & &\,. & &\,. & &\,. & &\,. & &\,. & &\,. & &\,. &
  &\,. & &\,. & &\,. & &\,. & &\,. & &\,. & &\,. & &\,. & &\,. & &\,. & &\,. & &  + &
  &\,. & &\,. & &\,. & &\,. & &\,. & &\,. & &\,. & &\,. & &\,. & &\,. & &\,. & &  + &
  &\,. & &\,. & &\,. & &\,. & &\,. & &\,. & &\,. & &\,. & &\,. & &  + & &  + & &  +
  \\[-2pt] 
  &\,. & &\,. & &\,. & &\,. & &\,. & &\,. & &\,. & &\,. & &\,. & &\,. & &\,. & &\,. &
  &\,. & &\,. & &\,. & &\,. & &\,. & &\,. & &\,. & &\,. & &\,. & &\,. & &\,. & &\,. &
  &\,. & &\,. & &\,. & &\,. & &\,. & &\,. & &\,. & &\,. & &\,. & &\,. & &\,. & &\,. &
  &  + & &\,. & &  - & &\,. & &\,. & &\,. & &\,. & &\,. & &\,. & &\,. & &\,. & &\,. &
  &\,. & &\,. & &\,. & &\,. & &\,. & &\,. & &\,. & &\,. & &\,. & &\,. & &\,. & &\,.
  \\[-2pt] 
  &\,. & &\,. & &\,. & &\,. & &\,. & &\,. & &\,. & &\,. & &\,. & &\,. & &\,. & &\,. &
  &\,. & &\,. & &\,. & &\,. & &\,. & &\,. & &\,. & &\,. & &\,. & &\,. & &\,. & &\,. &
  &\,. & &\,. & &\,. & &\,. & &\,. & &\,. & &\,. & &\,. & &\,. & &\,. & &\,. & &\,. &
  &\,. & &  + & &  - & &\,. & &\,. & &\,. & &\,. & &\,. & &\,. & &\,. & &\,. & &\,. &
  &\,. & &\,. & &\,. & &\,. & &\,. & &\,. & &\,. & &\,. & &\,. & &\,. & &\,. & &\,.
  \\[-2pt] 
  &\,. & &\,. & &\,. & &\,. & &\,. & &\,. & &\,. & &\,. & &\,. & &\,. & &\,. & &\,. &
  &\,. & &\,. & &\,. & &\,. & &\,. & &\,. & &\,. & &\,. & &\,. & &\,. & &\,. & &\,. &
  &\,. & &\,. & &\,. & &\,. & &\,. & &\,. & &\,. & &\,. & &\,. & &\,. & &\,. & &\,. &
  &\,. & &\,. & &\,. & &  + & &\,. & &  - & &\,. & &\,. & &\,. & &\,. & &\,. & &\,. &
  &\,. & &\,. & &\,. & &\,. & &\,. & &\,. & &\,. & &\,. & &\,. & &\,. & &\,. & &\,.
  \\[-2pt] 
  &\,. & &\,. & &\,. & &\,. & &\,. & &\,. & &\,. & &\,. & &\,. & &\,. & &\,. & &\,. &
  &\,. & &\,. & &\,. & &\,. & &\,. & &\,. & &\,. & &\,. & &\,. & &\,. & &\,. & &\,. &
  &\,. & &\,. & &\,. & &\,. & &\,. & &\,. & &\,. & &\,. & &\,. & &\,. & &\,. & &\,. &
  &\,. & &\,. & &\,. & &\,. & &  + & &  - & &\,. & &\,. & &\,. & &\,. & &\,. & &\,. &
  &\,. & &\,. & &\,. & &\,. & &\,. & &\,. & &\,. & &\,. & &\,. & &\,. & &\,. & &\,.
  \\[-2pt] 
  &\,. & &\,. & &\,. & &\,. & &\,. & &\,. & &\,. & &\,. & &\,. & &\,. & &\,. & &\,. &
  &\,. & &\,. & &\,. & &\,. & &\,. & &\,. & &\,. & &\,. & &\,. & &\,. & &\,. & &\,. &
  &\,. & &\,. & &\,. & &\,. & &\,. & &\,. & &\,. & &\,. & &\,. & &\,. & &\,. & &\,. &
  &\,. & &\,. & &\,. & &\,. & &\,. & &\,. & &  + & &\,. & &  - & &\,. & &\,. & &\,. &
  &\,. & &\,. & &\,. & &\,. & &\,. & &\,. & &\,. & &\,. & &\,. & &\,. & &\,. & &\,.
  \\[-2pt] 
  &\,. & &\,. & &\,. & &\,. & &\,. & &\,. & &\,. & &\,. & &\,. & &\,. & &\,. & &\,. &
  &\,. & &\,. & &\,. & &\,. & &\,. & &\,. & &\,. & &\,. & &\,. & &\,. & &\,. & &\,. &
  &\,. & &\,. & &\,. & &\,. & &\,. & &\,. & &\,. & &\,. & &\,. & &\,. & &\,. & &\,. &
  &\,. & &\,. & &\,. & &\,. & &\,. & &\,. & &\,. & &  + & &  - & &\,. & &\,. & &\,. &
  &\,. & &\,. & &\,. & &\,. & &\,. & &\,. & &\,. & &\,. & &\,. & &\,. & &\,. & &\,.
  \\[-2pt] 
  &\,. & &\,. & &\,. & &\,. & &\,. & &\,. & &\,. & &\,. & &\,. & &\,. & &\,. & &\,. &
  &\,. & &\,. & &\,. & &\,. & &\,. & &\,. & &\,. & &\,. & &\,. & &\,. & &\,. & &\,. &
  &\,. & &\,. & &\,. & &\,. & &\,. & &\,. & &\,. & &\,. & &\,. & &\,. & &\,. & &\,. &
  &\,. & &\,. & &\,. & &\,. & &\,. & &\,. & &\,. & &\,. & &\,. & &  + & &\,. & &  - &
  &\,. & &\,. & &\,. & &\,. & &\,. & &\,. & &\,. & &\,. & &\,. & &\,. & &\,. & &\,.
  \\[-2pt] 
  &\,. & &\,. & &\,. & &\,. & &\,. & &\,. & &\,. & &\,. & &\,. & &\,. & &\,. & &\,. &
  &\,. & &\,. & &\,. & &\,. & &\,. & &\,. & &\,. & &\,. & &\,. & &\,. & &\,. & &\,. &
  &\,. & &\,. & &\,. & &\,. & &\,. & &\,. & &\,. & &\,. & &\,. & &\,. & &\,. & &\,. &
  &\,. & &\,. & &\,. & &\,. & &\,. & &\,. & &\,. & &\,. & &\,. & &\,. & &  + & &  - &
  &\,. & &\,. & &\,. & &\,. & &\,. & &\,. & &\,. & &\,. & &\,. & &\,. & &\,. & &\,.
  \\
  \hline
  &\,. & &\,. & &\,. & &\,. & &\,. & &\,. & &\,. & &\,. & &\,. & &\,. & &\,. & &\,. &
  &\,. & &\,. & &\,. & &\,. & &\,. & &\,. & &\,. & &\,. & &\,. & &\,. & &\,. & &\,. &
  &  - & &  - & &  - & &\,. & &\,. & &\,. & &\,. & &\,. & &\,. & &\,. & &\,. & &\,. &
  &  + & &  + & &  + & &\,. & &\,. & &\,. & &\,. & &\,. & &\,. & &\,. & &\,. & &\,. &
  &\,. & &\,. & &\,. & &\,. & &\,. & &\,. & &\,. & &\,. & &\,. & &\,. & &\,. & &\,.
  \\[-2pt] 
  &\,. & &\,. & &\,. & &\,. & &\,. & &\,. & &\,. & &\,. & &\,. & &\,. & &\,. & &\,. &
  &\,. & &\,. & &\,. & &\,. & &\,. & &\,. & &\,. & &\,. & &\,. & &\,. & &\,. & &\,. &
  &\,. & &\,. & &\,. & &  - & &  - & &  - & &\,. & &\,. & &\,. & &\,. & &\,. & &\,. &
  &\,. & &\,. & &\,. & &  + & &  + & &  + & &\,. & &\,. & &\,. & &\,. & &\,. & &\,. &
  &\,. & &\,. & &\,. & &\,. & &\,. & &\,. & &\,. & &\,. & &\,. & &\,. & &\,. & &\,.
  \\[-2pt] 
  &\,. & &\,. & &\,. & &\,. & &\,. & &\,. & &\,. & &\,. & &\,. & &\,. & &\,. & &\,. &
  &\,. & &\,. & &\,. & &\,. & &\,. & &\,. & &\,. & &\,. & &\,. & &\,. & &\,. & &\,. &
  &\,. & &\,. & &\,. & &\,. & &\,. & &\,. & &  - & &  - & &  - & &\,. & &\,. & &\,. &
  &\,. & &\,. & &\,. & &\,. & &\,. & &\,. & &  + & &  + & &  + & &\,. & &\,. & &\,. &
  &\,. & &\,. & &\,. & &\,. & &\,. & &\,. & &\,. & &\,. & &\,. & &\,. & &\,. & &\,.
  \\[-2pt] 
  &\,. & &\,. & &\,. & &\,. & &\,. & &\,. & &\,. & &\,. & &\,. & &\,. & &\,. & &\,. &
  &\,. & &\,. & &\,. & &\,. & &\,. & &\,. & &\,. & &\,. & &\,. & &\,. & &\,. & &\,. &
  &\,. & &\,. & &\,. & &\,. & &\,. & &\,. & &\,. & &\,. & &\,. & &  - & &  - & &  - &
  &\,. & &\,. & &\,. & &\,. & &\,. & &\,. & &\,. & &\,. & &\,. & &  + & &  + & &  + &
  &\,. & &\,. & &\,. & &\,. & &\,. & &\,. & &\,. & &\,. & &\,. & &\,. & &\,. & &\,.
  \\[-2pt] 
  &\,. & &  + & &\,. & &  + & &\,. & &\,. & &\,. & &\,. & &\,. & &\,. & &\,. & &\,. &
  &\,. & &\,. & &\,. & &\,. & &\,. & &\,. & &\,. & &\,. & &\,. & &\,. & &\,. & &\,. &
  &  - & &  - & &  - & &\,. & &\,. & &\,. & &\,. & &\,. & &\,. & &\,. & &\,. & &\,. &
  &\,. & &\,. & &\,. & &\,. & &\,. & &\,. & &\,. & &\,. & &\,. & &\,. & &\,. & &\,. &
  &  + & &\,. & &\,. & &\,. & &\,. & &\,. & &\,. & &\,. & &\,. & &\,. & &\,. & &\,.
  \\[-2pt] 
  &  + & &\,. & &\,. & &\,. & &\,. & &  + & &\,. & &\,. & &\,. & &\,. & &\,. & &\,. &
  &\,. & &\,. & &\,. & &\,. & &\,. & &\,. & &\,. & &\,. & &\,. & &\,. & &\,. & &\,. &
  &  - & &  - & &  - & &\,. & &\,. & &\,. & &\,. & &\,. & &\,. & &\,. & &\,. & &\,. &
  &\,. & &\,. & &\,. & &\,. & &\,. & &\,. & &\,. & &\,. & &\,. & &\,. & &\,. & &\,. &
  &\,. & &  + & &\,. & &\,. & &\,. & &\,. & &\,. & &\,. & &\,. & &\,. & &\,. & &\,.
  \\[-2pt] 
  &\,. & &\,. & &  + & &\,. & &  + & &\,. & &\,. & &\,. & &\,. & &\,. & &\,. & &\,. &
  &\,. & &\,. & &\,. & &\,. & &\,. & &\,. & &\,. & &\,. & &\,. & &\,. & &\,. & &\,. &
  &  - & &  - & &  - & &\,. & &\,. & &\,. & &\,. & &\,. & &\,. & &\,. & &\,. & &\,. &
  &\,. & &\,. & &\,. & &\,. & &\,. & &\,. & &\,. & &\,. & &\,. & &\,. & &\,. & &\,. &
  &\,. & &\,. & &  + & &\,. & &\,. & &\,. & &\,. & &\,. & &\,. & &\,. & &\,. & &\,.
  \\[-2pt] 
  &\,. & &\,. & &\,. & &\,. & &\,. & &\,. & &\,. & &  + & &\,. & &  + & &\,. & &\,. &
  &\,. & &\,. & &\,. & &\,. & &\,. & &\,. & &\,. & &\,. & &\,. & &\,. & &\,. & &\,. &
  &\,. & &\,. & &\,. & &  - & &  - & &  - & &\,. & &\,. & &\,. & &\,. & &\,. & &\,. &
  &\,. & &\,. & &\,. & &\,. & &\,. & &\,. & &\,. & &\,. & &\,. & &\,. & &\,. & &\,. &
  &\,. & &\,. & &\,. & &  + & &\,. & &\,. & &\,. & &\,. & &\,. & &\,. & &\,. & &\,.
  \\[-2pt] 
  &\,. & &\,. & &\,. & &\,. & &\,. & &\,. & &  + & &\,. & &\,. & &\,. & &\,. & &  + &
  &\,. & &\,. & &\,. & &\,. & &\,. & &\,. & &\,. & &\,. & &\,. & &\,. & &\,. & &\,. &
  &\,. & &\,. & &\,. & &  - & &  - & &  - & &\,. & &\,. & &\,. & &\,. & &\,. & &\,. &
  &\,. & &\,. & &\,. & &\,. & &\,. & &\,. & &\,. & &\,. & &\,. & &\,. & &\,. & &\,. &
  &\,. & &\,. & &\,. & &\,. & &  + & &\,. & &\,. & &\,. & &\,. & &\,. & &\,. & &\,.
  \\[-2pt] 
  &\,. & &\,. & &\,. & &\,. & &\,. & &\,. & &\,. & &\,. & &  + & &\,. & &  + & &\,. &
  &\,. & &\,. & &\,. & &\,. & &\,. & &\,. & &\,. & &\,. & &\,. & &\,. & &\,. & &\,. &
  &\,. & &\,. & &\,. & &  - & &  - & &  - & &\,. & &\,. & &\,. & &\,. & &\,. & &\,. &
  &\,. & &\,. & &\,. & &\,. & &\,. & &\,. & &\,. & &\,. & &\,. & &\,. & &\,. & &\,. &
  &\,. & &\,. & &\,. & &\,. & &\,. & &  + & &\,. & &\,. & &\,. & &\,. & &\,. & &\,.
  \\[-2pt] 
  &\,. & &\,. & &\,. & &\,. & &\,. & &\,. & &\,. & &\,. & &\,. & &\,. & &\,. & &\,. &
  &\,. & &  + & &\,. & &  + & &\,. & &\,. & &\,. & &\,. & &\,. & &\,. & &\,. & &\,. &
  &\,. & &\,. & &\,. & &\,. & &\,. & &\,. & &  - & &  - & &  - & &\,. & &\,. & &\,. &
  &\,. & &\,. & &\,. & &\,. & &\,. & &\,. & &\,. & &\,. & &\,. & &\,. & &\,. & &\,. &
  &\,. & &\,. & &\,. & &\,. & &\,. & &\,. & &  + & &\,. & &\,. & &\,. & &\,. & &\,.
  \\[-2pt] 
  &\,. & &\,. & &\,. & &\,. & &\,. & &\,. & &\,. & &\,. & &\,. & &\,. & &\,. & &\,. &
  &  + & &\,. & &\,. & &\,. & &\,. & &  + & &\,. & &\,. & &\,. & &\,. & &\,. & &\,. &
  &\,. & &\,. & &\,. & &\,. & &\,. & &\,. & &  - & &  - & &  - & &\,. & &\,. & &\,. &
  &\,. & &\,. & &\,. & &\,. & &\,. & &\,. & &\,. & &\,. & &\,. & &\,. & &\,. & &\,. &
  &\,. & &\,. & &\,. & &\,. & &\,. & &\,. & &\,. & &  + & &\,. & &\,. & &\,. & &\,.
  \\[-2pt] 
  &\,. & &\,. & &\,. & &\,. & &\,. & &\,. & &\,. & &\,. & &\,. & &\,. & &\,. & &\,. &
  &\,. & &\,. & &  + & &\,. & &  + & &\,. & &\,. & &\,. & &\,. & &\,. & &\,. & &\,. &
  &\,. & &\,. & &\,. & &\,. & &\,. & &\,. & &  - & &  - & &  - & &\,. & &\,. & &\,. &
  &\,. & &\,. & &\,. & &\,. & &\,. & &\,. & &\,. & &\,. & &\,. & &\,. & &\,. & &\,. &
  &\,. & &\,. & &\,. & &\,. & &\,. & &\,. & &\,. & &\,. & &  + & &\,. & &\,. & &\,.
  \\[-2pt] 
  &\,. & &\,. & &\,. & &\,. & &\,. & &\,. & &\,. & &\,. & &\,. & &\,. & &\,. & &\,. &
  &\,. & &\,. & &\,. & &\,. & &\,. & &\,. & &\,. & &  + & &\,. & &  + & &\,. & &\,. &
  &\,. & &\,. & &\,. & &\,. & &\,. & &\,. & &\,. & &\,. & &\,. & &  - & &  - & &  - &
  &\,. & &\,. & &\,. & &\,. & &\,. & &\,. & &\,. & &\,. & &\,. & &\,. & &\,. & &\,. &
  &\,. & &\,. & &\,. & &\,. & &\,. & &\,. & &\,. & &\,. & &\,. & &  + & &\,. & &\,.
  \\[-2pt] 
  &\,. & &\,. & &\,. & &\,. & &\,. & &\,. & &\,. & &\,. & &\,. & &\,. & &\,. & &\,. &
  &\,. & &\,. & &\,. & &\,. & &\,. & &\,. & &  + & &\,. & &\,. & &\,. & &\,. & &  + &
  &\,. & &\,. & &\,. & &\,. & &\,. & &\,. & &\,. & &\,. & &\,. & &  - & &  - & &  - &
  &\,. & &\,. & &\,. & &\,. & &\,. & &\,. & &\,. & &\,. & &\,. & &\,. & &\,. & &\,. &
  &\,. & &\,. & &\,. & &\,. & &\,. & &\,. & &\,. & &\,. & &\,. & &\,. & &  + & &\,.
  \\[-2pt] 
  &\,. & &\,. & &\,. & &\,. & &\,. & &\,. & &\,. & &\,. & &\,. & &\,. & &\,. & &\,. &
  &\,. & &\,. & &\,. & &\,. & &\,. & &\,. & &\,. & &\,. & &  + & &\,. & &  + & &\,. &
  &\,. & &\,. & &\,. & &\,. & &\,. & &\,. & &\,. & &\,. & &\,. & &  - & &  - & &  - &
  &\,. & &\,. & &\,. & &\,. & &\,. & &\,. & &\,. & &\,. & &\,. & &\,. & &\,. & &\,. &
  &\,. & &\,. & &\,. & &\,. & &\,. & &\,. & &\,. & &\,. & &\,. & &\,. & &\,. & &  +
  \end{alignat*}
  \end{center}
  \caption{The row canonical form and the nullspace basis of $E_4$}
  \label{degree4expansionrcfnull}
  \end{table}

  \begin{table}
  \[
  \begin{array}{r}
  - (a(bc))d - (a(bd))c - (a(cd))b + (ab)(cd) + (ac)(bd) + (ad)(bc) \\
  - (b(ac))d - (b(ad))c - (b(cd))a + (ba)(cd) + (bc)(ad) + (bd)(ac) \\
  - (c(ab))d - (c(ad))b - (c(bd))a + (ca)(bd) + (cb)(ad) + (cd)(ab) \\
  - (d(ab))c - (d(ac))b - (d(bc))a + (da)(bc) + (db)(ac) + (dc)(ab) \\
  ((ab)d)c + ((ac)d)b - (a(bc))d - (a(bd))c - (a(cd))b + a((bc)d) \\
  ((ab)c)d + ((ad)c)b - (a(bc))d - (a(bd))c - (a(cd))b + a((bd)c) \\
  ((ac)b)d + ((ad)b)c - (a(bc))d - (a(bd))c - (a(cd))b + a((cd)b) \\
  ((ba)d)c + ((bc)d)a - (b(ac))d - (b(ad))c - (b(cd))a + b((ac)d) \\
  ((ba)c)d + ((bd)c)a - (b(ac))d - (b(ad))c - (b(cd))a + b((ad)c) \\
  ((bc)a)d + ((bd)a)c - (b(ac))d - (b(ad))c - (b(cd))a + b((cd)a) \\
  ((ca)d)b + ((cb)d)a - (c(ab))d - (c(ad))b - (c(bd))a + c((ab)d) \\
  ((ca)b)d + ((cd)b)a - (c(ab))d - (c(ad))b - (c(bd))a + c((ad)b) \\
  ((cb)a)d + ((cd)a)b - (c(ab))d - (c(ad))b - (c(bd))a + c((bd)a) \\
  ((da)c)b + ((db)c)a - (d(ab))c - (d(ac))b - (d(bc))a + d((ab)c) \\
  ((da)b)c + ((dc)b)a - (d(ab))c - (d(ac))b - (d(bc))a + d((ac)b) \\
  ((db)a)c + ((dc)a)b - (d(ab))c - (d(ac))b - (d(bc))a + d((bc)a)
  \end{array}
  \]
  \caption{Nullspace basis identities 1--16}
  \label{nullspacebasis}
  \end{table}

\begin{remark} \label{nonlinearK}
If we write $D_{a,d}(x) = (a,x,d) = (ax)d - a(xd)$ then $K$ represents a form
of the derivation rule: $D_{a,d}(bc) = D_{a,d}(b)c + D_{a,d}(c)b$. In $K$, if
we (simultaneously) replace $a$ by $b$, $d$ by $c$, and $b, c$ by $a$, then we
obtain
  \allowdisplaybreaks
  \begin{align*}
  &
    ( ( b a ) c ) a
  + ( ( b a ) c ) a
  - ( b a^2 ) c
  - ( b ( a c ) ) a
  - ( b ( a c ) ) a
  + b ( a^2 c )
  \\
  &=
  2 ( ( b a ) c ) a - 2 ( b ( a c ) ) a - ( b a^2 ) c + b ( a^2 c )
  =
  2 (b,a,c) a - ( b, a^2, c ),
  \end{align*}
If we linearize $2(a,b,d)b - (a,b^2,d)$ and apply right commutativity then we
obtain an identity equivalent to $K$ (since $\mathrm{char}\,\mathbb{F} \ne 2$):
  \allowdisplaybreaks
  \begin{align*}
  &
  2 (a,b,d)c + 2 (a,c,d)b - (a,bc,d) - (a,cb,d)
  \\
  &
  =
  2 ((ab)d)c - 2 (a(bd))c + 2 ((ac)d)b - 2 (a(cd))b - (a(bc))d + a((bc)d)
  \\
  &\quad
  - (a(cb))d + a((cb)d)
  \\
  &=
  2 ((ab)d)c + 2 ((ac)d)b - 2 (a(bc))d - 2 (a(bd))c - 2 (a(cd))b + 2 a((bc)d).
  \end{align*}
\end{remark}

\begin{remark} \label{directproof}
We have the following direct proof that $(b,a^2,c) = 2(b,a,c)a$:
  \allowdisplaybreaks
  \begin{align*}
  &
  (b,a^2,c)
  =
  ( b a^2 ) c - b ( a^2 c )
  \\
  &=
  ( b \dashv ( a \dashv a + a \vdash a ) + ( a \dashv a + a \vdash a ) \vdash b ) \dashv c
  \\
  &\quad
  +
  c \vdash ( b \dashv ( a \dashv a + a \vdash a ) + ( a \dashv a + a \vdash a ) \vdash b )
  \\
  &\quad
  -
  b \dashv ( ( a \dashv a + a \vdash a ) \dashv c + c \vdash ( a \dashv a + a \vdash a ) )
  \\
  &\quad
  -
  ( ( a \dashv a + a \vdash a ) \dashv c + c \vdash ( a \dashv a + a \vdash a ) ) \vdash b
  \\
  &=
  ( b \dashv ( a \dashv a ) ) \dashv c
  +
  ( b \dashv ( a \vdash a ) ) \dashv c
  +
  ( ( a \dashv a ) \vdash b ) \dashv c
  +
  ( ( a \vdash a ) \vdash b ) \dashv c
  \\
  &\quad
  +
  c \vdash ( b \dashv ( a \dashv a ) )
  +
  c \vdash ( b \dashv ( a \vdash a ) )
  +
  c \vdash ( ( a \dashv a ) \vdash b )
  +
  c \vdash ( ( a \vdash a ) \vdash b )
  \\
  &\quad
  -
  b \dashv ( ( a \dashv a ) \dashv c )
  -
  b \dashv ( ( a \vdash a ) \dashv c )
  -
  b \dashv ( c \vdash ( a \dashv a ) )
  -
  b \dashv ( c \vdash ( a \vdash a ) )
  \\
  &\quad
  -
  ( ( a \dashv a ) \dashv c ) \vdash b
  -
  ( ( a \vdash a ) \dashv c ) \vdash b
  -
  ( c \vdash ( a \dashv a ) ) \vdash b
  -
  ( c \vdash ( a \vdash a ) ) \vdash b
  \\
  &=
  2 \widehat b a a c
  +
  2 a a \widehat b c
  +
  2 c \widehat b a a
  +
  2 c a a \widehat b
  -
  2 \widehat b a a c
  -
  2 \widehat b c a a
  -
  2 a a c \widehat b
  -
  2 c a a \widehat b
  \\
  &=
  -
  2 \widehat b c a a
  +
  2 c \widehat b a a
  +
  2 a a \widehat b c
  -
  2 a a c \widehat b,
  \\
  &
  (b,a,c)a
  =
  ((ba)c)a - (b(ac))a
  \\
  &=
  ( ( b \dashv a + a \vdash b ) \dashv c + c \vdash ( b \dashv a + a \vdash b ) ) \dashv a
  \\
  &
  +
  a \vdash ( ( b \dashv a + a \vdash b ) \dashv c + c \vdash ( b \dashv a + a \vdash b ) )
  \\
  &\quad
  -
  ( b \dashv ( a \dashv c + c \vdash a ) + ( a \dashv c + c \vdash a ) \vdash b ) \dashv a
  \\
  &
  -
  a \vdash ( b \dashv ( a \dashv c + c \vdash a ) + ( a \dashv c + c \vdash a ) \vdash b )
  \\
  &=
  ( ( b \dashv a ) \dashv c ) \dashv a
  +
  ( ( a \vdash b ) \dashv c ) \dashv a
  +
  ( c \vdash ( b \dashv a ) ) \dashv a
  +
  ( c \vdash ( a \vdash b ) ) \dashv a
  \\
  &\quad
  +
  a \vdash ( ( b \dashv a ) \dashv c )
  +
  a \vdash ( ( a \vdash b ) \dashv c )
  +
  a \vdash ( c \vdash ( b \dashv a ) )
  +
  a \vdash ( c \vdash ( a \vdash b ) )
  \\
  &\quad
  -
  ( b \dashv ( a \dashv c ) ) \dashv a
  -
  ( b \dashv ( c \vdash a ) ) \dashv a
  -
  ( ( a \dashv c ) \vdash b ) \dashv a
  -
  ( ( c \vdash a ) \vdash b ) \dashv a
  \\
  &\quad
  -
  a \vdash ( b \dashv ( a \dashv c ) )
  -
  a \vdash ( b \dashv ( c \vdash a ) )
  -
  a \vdash ( ( a \dashv c ) \vdash b )
  -
  a \vdash ( ( c \vdash a ) \vdash b )
  \\
  &=
  \widehat b a c a
  +
  a \widehat b c a
  +
  c \widehat b a a
  +
  c a \widehat b a
  +
  a \widehat b a c
  +
  a a \widehat b c
  +
  a c \widehat b a
  +
  a c a \widehat b
  \\
  &\quad
  -
  \widehat b a c a
  -
  \widehat b c a a
  -
  a c \widehat b a
  -
  c a \widehat b a
  -
  a \widehat b a c
  -
  a \widehat b c a
  -
  a a c \widehat b
  -
  a c a \widehat b
  \\
  &=
  -
  \widehat b c a a
  +
  c \widehat b a a
  +
  a a \widehat b c
  -
  a a c \widehat b.
  \end{align*}
\end{remark}

\begin{definition} \label{newdefinition}
Vel\'asquez and Felipe \cite{VelasquezFelipe1} define a \textbf{(right) quasi-Jordan
algebra} to be a nonassociative algebra satisfying the right-commutative identity
$a ( b c ) = a ( c b )$ and the right quasi-Jordan identity $( b a^2 ) a = ( b a ) a^2$.
We propose the following definition which includes the (nonlinear version of the)
identity $K$: a \textbf{semispecial} (right) quasi-Jordan algebra over a field of
characteristic $\ne 2, 3$ is a (right) quasi-Jordan algebra satisfying the
\textbf{associator-derivation identity}:
  \[
  ( b, a^2, c ) = 2 (b,a,c) a.
  \]
\end{definition}

\begin{remark}
If $D$ is an (associative) dialgebra, then the \textbf{plus algebra} of $D$ is the algebra
$D^+$ with the same underlying vector space but with the operation
$ab = a \dashv b + b \vdash a$.
By analogy with the theory of Jordan algebras, the natural definition of a \textbf{special}
(right) quasi-Jordan algebra is one which is isomorphic to a subalgebra of $D^+$ for
some (associative) dialgebra $D$.  By the results of this paper, it is clear that every
special (right) quasi-Jordan algebra is a semispecial (right) quasi-Jordan algebra.
However, the converse is false: it has been shown recently by Bremner and Peresi
\cite{BremnerPeresi} that there exist polynomial identities in degree 8 satisfied by
the quasi-Jordan product in every (associative) dialgebra which do not follow from
the three identities of Definition \ref{newdefinition}. We therefore have three classes
of algebras, related by strict containments, where RQJ means (right) quasi-Jordan:
  \[
  \text{special RQJ algebras}
  \subsetneq
  \text{semispecial RQJ algebras}
  \subsetneq
  \text{RQJ algebras}
  \]
This motivates our choice of the term semispecial in Definition \ref{newdefinition}.
\end{remark}

\begin{remark}
Any associative algebra is a dialgebra in which the operations $\dashv$ and $\vdash$ coincide.
It follows from Remark \ref{directproof} that every special Jordan algebra is a
special quasi-Jordan algebra and hence satisfies the associator-derivation identity.
The referee raised the question of whether every Jordan algebra satisfies
the associator-derivation identity. The linearization of the Jordan identity $(a^2 b)a -
a^2(ba)$, assuming commutativity and characteristic $\ne 2$, is
  \[
  J(a,b,c,d)
  =
  ((ac)b)d + ((ad)b)c + ((cd)b)a - (ab)(cd) - (ac)(bd) - (ad)(bc).
  \]
The linearization of the associator-derivation identity $( b, a^2, c ) = 2 (b,a,c) a$,
assuming commutativity and characteristic $\ne 2$, is
  \[
  K(a,b,c,d)
  =
  ((ab)c)d - ((ac)b)d - ((ad)b)c + ((ad)c)b + ((bd)c)a - ((cd)b)a.
  \]
Then we have $K(a,b,c,d) = J(a,c,b,d) - J(a,b,c,d)$, which implies that every Jordan
algebra satisfies the associator-derivation identity.
Hentzel and Peresi  \cite{HentzelPeresi} consider the identity $(b,a^2,a) = 2 (b,a,a)a$
which is a weak form of the associator-derivation identity obtained by setting $c = a$.
They prove that over a field of characteristic $\ne 2, 3$ any commutative algebra which
satisfies this weaker identity, and is also semiprime, must be a Jordan algebra.
\end{remark}

\begin{remark}
After this work was completed, the author became aware of two closely related papers.
In Kolesnikov \cite{Kolesnikov} the polynomial identities (26--27) are equivalent
(collectively) to the opposite versions of the right quasi-Jordan identity and the
associator-derivation identity, $a ( a^2 b ) = a^2 ( a b )$ and $(b, a^2, c) = 2 a (b,a,c)$,
assuming left commutativity $(ab)c = (ba)c$. Pozhidaev \cite{Pozhidaev} gives a simple
algorithm to obtain from any variety of algebras a corresponding variety of dialgebras; 
in the case of Jordan algebras we obtain the variety of semispecial quasi-Jordan
algebras.
\end{remark}

\noindent \textbf{Acknowledgements}. I thank Ra\'ul Felipe, Luiz Peresi, Alexandre
Pozhidaev and the referee for very helpful comments.  This research was partially
supported by NSERC, the Natural Sciences and Engineering Research Council of
Canada.

\end{document}